\newcounter{cnt}
\def\mydggeometry{\makeatletter\dg@YGRID=1\dg@XGRID=20\unitlength=0.003pt\makeatother}
\newtheorem{theorem}{Theorem}[section]
\newtheorem{lemma}[theorem]{Lemma}
\newtheorem{corollary}[theorem]{Corollary}
\newtheorem{proposition}[theorem]{Proposition}
\theoremstyle{definition}
\newtheorem{definition}[theorem]{Definition}
\newtheorem{example}[theorem]{Example}
\newtheorem{remark}[theorem]{Remark}
\newcommand{\g}{\mathfrak{g}}
\newcommand{\Z}{\mathbb{Z}}
\newcommand{\C}{\mathbb{C}}
\newcommand{\R}{\mathbb{R}}
\newcommand{\ttt}{\textsf{t}}
\newcommand{\sss}{\textsf{s}}
\newcommand{\G}{{\operatorname*{G}}}
\newcommand{\lie}{\mathfrak}
\newcommand{\U}{\mathbb{U}}
\newcommand{\supp}{\text{supp }}
\begin{document}
\author[G. Balla, G. Fourier, and K. Kambaso]{George Balla, Ghislain Fourier, and Kunda Kambaso}
\title[Demazure modules in types A and C]{PBW filtration and monomial bases for Demazure modules in types A and C}

\address{Chair of Algebra and Representation Theory, RWTH Aachen University, Pontdriesch 10-16, 52062 Aachen, Germany}
\email{balla@art.rwth-aachen.de}
\email{fourier@art.rwth-aachen.de}
\email{kambaso@art.rwth-aachen.de}

\maketitle
\begin{abstract}
    We characterise the symplectic Weyl group elements such that the FFLV basis is compatible with the PBW filtration on symplectic Demazure modules, extending type {\tt A} results by the second author. Surprisingly, the number of such elements depends not on the type {\tt A} or {\tt C} of the Lie algebra but on the rank only.\\
    \textbf{Keywords}: PBW filtration; FFLV basis; Demazure modules
\end{abstract}

\section{Introduction} 
Let $\lie g$ be a complex simple, finite-dimensional Lie algebra. For a dominant integral weight $\lambda$, let $V(\lambda)$ denote the simple, finite-dimensional $\lie g$-module of highest weight $\lambda$ and let $v_\lambda\in V(\lambda)$ be a fixed highest weight vector. Let $\text{gr } V(\lambda)$ denote the associated graded module with respect to the PBW filtration on $V(\lambda)$. These degenerate modules have been studied extensively in the last decade, mainly also due to the induced degenerate flag variety, defined by Feigin in \cite{Fei12} as a highest weight orbit of an action of a degenerate group on $\text{gr } V(\lambda)$. One of the achievements of this framework so far is a ``new'' monomial basis of $\text{gr } V(\lambda)$ and hence of $V(\lambda)$ for $\lie{sl}_n, \lie{sp}_{2n}$ \cite{FFL11a, FFL11b}. These monomials are naturally assigned with lattice points in a normal polytope $P(\lambda)\subset \mathbb{R}_{\geq 0}^{N}$, $N$ the number of positive roots, known as the FFLV polytope. 

We seek to understand if this basis is compatible with Demazure modules. The second author has provided in \cite{Fou16} the compatibility for Demazure modules in type {\tt A} corresponding to a class of Weyl group elements that avoid the patterns $4231$ and $2413$. These are called triangular Weyl group elements. Formulated differently, a particular face of the FFLV polytope indexes a basis of the graded Demazure modules for triangular Weyl group elements. The PBW-degenerated Schubert varieties in that context were also studied. The main goal of the current paper is to extend these results to a more general setting that also includes Demazure modules and Schubert varieties in type {\tt C}.\\

Let $R^-$ denote the set of negative roots of ${\lie g}$. We call a subset $A \subset R^-$, \emph{FFLV-admissible} if it is closed under taking sums inside $R^-$ and stable for a \textit{join} operation (see Definition \ref{def:admissible}). This join operation allows for a straightening law on monomials. For example, the sets corresponding to the triangular subsets studied in \cite{Fou16} are admissible but admissibility is not restricted to triangular subsets, as for example, in type ${\tt A}$ every single element set is admissible. We consider a submodule of $V(\lambda)$ corresponding to an FFLV-admissible set $A$, which we denote by $V_A(\lambda)$, generated through $v_{\lambda}$ by the subalgebra of $\lie g$ defined by $A$. It turns out that the induced PBW filtration is compatible with the one on $V(\lambda)$ (Proposition \ref{pro:pbwsubmodules}). Let $\text{gr } V_A(\lambda)$ denote the associated graded submodule.

Our first interest is in constructing a monomial basis for $V_{A}(\lambda)$ and $\text{gr } V_{A}(\lambda)$. For this, we will consider the following faces of the polytope $P(\lambda)$: for $A\subset R^-$, let $P_{A}(\lambda)$ denote the face of $P(\lambda)$ obtained by setting the coordinates $s_{\beta}=0$ for all $\beta \notin A$. Let $S_{A}(\lambda)$ denote the set of lattice points in $P_{A}(\lambda)$. We prove: 

\begin{theorem}\label{thm:intro-main}
Let $\lie g$ be $\lie{sl}_{n+1}(\C)$ or $\lie{sp}_{2n}(\C)$ and let $A\subset R^-$ be FFLV-admissible, then:
\begin{enumerate}
    \item[(a)] for all $\lambda, \mu$, dominant integral weights, $S_{A}(\lambda) + S_{A}(\mu)= S_{A}(\lambda+\mu)$.
    \item[(b)] the set $S_{A}(\lambda)$ parametrizes a monomial basis of the PBW-graded submodule $\text{gr } V_{A}(\lambda)\subset \text{gr } V(\lambda)$ and hence of $V_{A}(\lambda)$. 
\end{enumerate}
\end{theorem}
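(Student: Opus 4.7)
My approach to part (a) is to reduce the claim to the known normality of the full FFLV polytope, $S(\lambda)+S(\mu)=S(\lambda+\mu)$, established in \cite{FFL11a, FFL11b}. Since $P_A(\lambda)$ is cut out of $P(\lambda)$ by the coordinate hyperplanes $s_\beta=0$ for $\beta\notin A$, and all lattice points have nonnegative integer coordinates, the inclusion $S_A(\lambda)+S_A(\mu)\subseteq S_A(\lambda+\mu)$ is immediate. For the reverse inclusion, given $\mathbf{s}\in S_A(\lambda+\mu)\subset S(\lambda+\mu)$, normality supplies a decomposition $\mathbf{s}=\mathbf{s}^{(1)}+\mathbf{s}^{(2)}$ with $\mathbf{s}^{(1)}\in S(\lambda)$ and $\mathbf{s}^{(2)}\in S(\mu)$; since both summands are nonnegative integers and the total vanishes at each coordinate $\beta\notin A$, each summand vanishes there as well, placing them in $S_A(\lambda)$ and $S_A(\mu)$.

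For part (b), linear independence is nearly automatic: the monomials indexed by $S_A(\lambda)$ form a subset of the FFL basis of $\mathrm{gr}\,V(\lambda)$ indexed by $S(\lambda)$, so they are linearly independent in $\mathrm{gr}\,V(\lambda)$, and hence in $\mathrm{gr}\,V_A(\lambda)\subseteq\mathrm{gr}\,V(\lambda)$ via Proposition \ref{pro:pbwsubmodules}; lifting then gives linear independence in $V_A(\lambda)$ itself.

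The content is in showing that $S_A(\lambda)$ spans $\mathrm{gr}\,V_A(\lambda)$. By definition, $V_A(\lambda)$ is generated from $v_\lambda$ by the subalgebra attached to $A$, so every element is a linear combination of products $f_{\beta_1}\cdots f_{\beta_k}v_\lambda$ with each $\beta_i\in A$. My plan is to run the FFL straightening algorithm on such a product and prove, by induction on the standard FFL order on multi-exponents, that every monomial appearing in the reduction has support contained in $A$. The two pieces of admissibility are tailored to this: closure of $A$ under taking sums in $R^-$ controls the commutator relations $[f_\alpha,f_\beta]=c\,f_{\alpha+\beta}$, while stability under the join operation controls the quadratic ``straightening'' relations driving the FFL proof, whose right-hand sides are built from joins of the roots occurring on the left. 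Once spanning and independence hold for $\mathrm{gr}\,V_A(\lambda)$, a dimension count, combined with the Minkowski property from part (a), promotes the result to $V_A(\lambda)$.

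I expect the main obstacle to be precisely this bookkeeping: verifying that every relation invoked by the FFL straightening process preserves the condition $\mathrm{supp}\subseteq A$. The difficulty is more pronounced in type {\tt C} than in type {\tt A}, because of the interplay between short and long roots in the symplectic relations, and this is exactly where stability under the join operation is genuinely needed rather than just closure under sums. Matching the admissibility axioms to the precise set of relations used in the symplectic FFL argument---so that the induction carries through uniformly in both types {\tt A} and {\tt C}---is, to my mind, the crux of the proof.
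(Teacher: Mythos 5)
Your overall architecture coincides with the paper's: part~(a) is proved there exactly as you propose (Corollary~\ref{coro-mink}: $P_A(\lambda)$ is a face of the normal polytope $P(\lambda)$ cut out by $s_\beta=0$, and nonnegativity forces both Minkowski summands of a lattice point vanishing outside $A$ to vanish there too), and the linear-independence half of part~(b) is likewise identical (a subset of the FFLV basis of Theorem~\ref{thm-fflv}, sitting inside $\text{gr } V_A(\lambda)$ by the compatibility of Proposition~\ref{pro:pbwsubmodules}). One small correction: no dimension count or Minkowski property is needed to pass from $\text{gr } V_A(\lambda)$ to $V_A(\lambda)$; a homogeneous basis of the associated graded module lifts to a basis of the filtered module directly. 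The Minkowski property is used elsewhere, for favourability (Lemma~\ref{lem:favourable}), not here.

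The genuine gap is in the spanning step, which you correctly identify as the crux but then defer rather than close, and the mechanism you sketch would not suffice as stated. The straightening of a violated Dyck path $\mathbf{p}$ in \cite{FFL11a, FFL11b} is not a cascade of quadratic join relations applied term by term: it is produced by applying operators $e_\beta\in U(\lie n^+)$ to $f_{\theta}^{N}\cdot v_\lambda=0$ ($\theta$ the highest root of the support of $\mathbf{p}$), and the monomials that appear involve roots obtained from those in play by adding differences $\beta=\gamma_1-\gamma_2$; such roots need not be joins of two roots already occurring, so join stability of $A$ alone does not confine the reduction to $A$. The paper's proof of Theorem~\ref{thm:main} interposes the join closure $\overline{\mathbf{p}}^{\oplus}$: Propositions~\ref{prop-a-grid} and~\ref{prop-c-grid} identify it as a full grid of roots contained in $A$ whenever $A$ is admissible, the operators actually needed are exactly the $e_\beta$ with $\beta=\gamma_1-\gamma_2$, $\gamma_i\in\overline{\mathbf{p}}^{\oplus}$, and Proposition~\ref{prop-diff} supplies the invariant your induction is missing, namely $(\overline{\mathbf{p}}^{\oplus}+\gamma)\cap R^-\subset\overline{\mathbf{p}}^{\oplus}$ for such differences $\gamma$. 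Moreover, where you anticipate a direct symplectic verification tracking long and short roots, the paper avoids this entirely by folding: admissible sets in type {\tt C}$_n$ are precisely the orbit sets of $\tau$-invariant admissible sets in type {\tt A}$_{2n-1}$ (Proposition~\ref{prop-adm-ind}), and joins of orbits are orbits of joins (Proposition~\ref{prop-c-grid}), so the support control reduces to the $\lie{sl}$-case already settled. Without these two ideas --- the join-closure invariant and the reduction via the Dynkin automorphism --- your induction has no proof that it stays supported on $A$, which is exactly the content of the theorem beyond \cite{Fou16}.
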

Our approach in proving part (b) of the above theorem relies on having defining relations for the modules $\text{gr } V(\lambda)$ as constructed in \cite{FFL11a, FFL11b} (see Theorem \ref{thm:main}). 
A direct consequence is the following and explains our interest in considering FFLV-admissible subsets: if $A\subset R^-$ is not FFLV-admissible and $\lambda$ is regular, then $S_{A}(\lambda)$ doesn't parametrize (in a ``natural'' way) a monomial basis of $\text{gr } V_{A}(\lambda)$ (Lemma \ref{lem:notadmissible}). We should note, that a Kogan face construction as in \cite{Kog00, KST12, BF15} is not applicable, as in contrast to that case, the coordinates are assigned positive roots and not simple roots (with respect to a reduced expression of the longest Weyl group element).

We would also like to note that the polytopes constructed in \cite{Kam20} parametrizing monomial bases for submodules studied there, are in general not faces of the FFLV polytope $P(\lambda)$ in types {\tt A} and {\tt C}. On the other hand, monomial bases similar to those discussed here have been constructed in \cite{Mak19} for orthogonal modules in type {\tt B}.  However, our approach is not applicable in that situation, since defining relations for type {\tt B} are still missing.

It turns out that the submodules $V_{A}(\lambda)$ are favourable in the sense of \cite{FFL17b}, so we obtain flat PBW-degenerated symplectic Schubert varieties which degenerate further into toric varieties (Proposition \ref{pro:degenerations}). Moreover, we consequently describe a monomial basis for the homogeneous coordinate rings of the PBW-degenerate Schubert varieties labelled by PBW-semistandard tableaux (Proposition \ref{pro:permissibletableaux}).\\

Our paper is organised as follows: in Section~\ref{sec:preliminaries}, we recall basic notation on Lie algebras while in Section~\ref{sec:admissible},
we introduce FFLV-admissible subsets and show that they are compatible with Dynkin automorphisms. In Section~\ref{sec:triangular}, we describe the correspondence between FFLV-admissible subsets and triangular Weyl group elements and in Section~\ref{sec:application} we describe the monomial basis for our submodules and give the geometric interpretation in terms of PBW-degenerated Schubert varieties.

\subsection*{Acknowledgements}
The first and third authors are supported by the Deutscher Akademischer Austauschdienst (DAAD, German Academic Exchange Service) scholarship program: Research Grants - Doctoral Programs in Germany [program-ID 57440921]. The work of the second author is funded by the Deutsche Forschungsgemeinschaft (DFG, German Research Foundation) through [project-ID 286237555 – TRR 195].

\section{Preliminaries: notation on Lie algebras}\label{sec:preliminaries}
We follow \cite{Hum12}. Let $\lie g$ be the simple, complex Lie algebra of classical type ${\tt A}_n$, ${\tt B}_n$, ${\tt C}_n$ or ${\tt D}_n$.
We fix a triangular decomposition $\lie g = \lie n^+ \oplus \lie h \oplus \lie n^-$. Having fixed this decomposition, we denote by $\{\alpha_1, \ldots, \alpha_n\}$, the set of simple roots, $R^+$ and $R^-$, the sets of positive roots and negative roots respectively.  
\begin{itemize}
    \item For $\lie g = \lie{sl}_{n+1}$ (of type {\tt A}$_n$) and $1\leq i \leq j\leq n$, we abbreviate $\alpha_{i,j} := \alpha_i + \cdots + \alpha_j.$ Then $R_{{\tt A}_n}^+= \{\alpha_{i,j} \mid 1\leq i \leq j\leq n\}$. 
    \item For $\lie g = \lie{so}_{2n+1}$ (of type {\tt B}$_n$), $\alpha_{i,j}$ as above and $\alpha_{i, \overline{j}} := \alpha_ i + \cdots + \alpha_{j-1} + 2 \alpha_{j} + \cdots + 2 \alpha_n$, we have $R_{{\tt B}_n}^+= \{\alpha_{i,j}, \alpha_{k,\overline{l}} \mid 1\leq i \leq j\leq n, \,\, 1\leq k<l\leq n\}$. 
    \item For $\lie g = \lie{sp}_{2n}$ (of type {\tt C}$_n$), $\alpha_{i,j}$ as above and $\alpha_{i, \overline{j}} := \alpha_ i + \cdots + \alpha_{j-1} + 2 \alpha_j + \cdots + 2 \alpha_{n-1} + \alpha_n$ (with $\alpha_{i,n}= \alpha_{i,\overline{n}}$), one has $R_{{\tt C}_n}^+= \{\alpha_{i,j}, \alpha_{i,\overline{j}} \mid 1\leq i \leq j\leq n\}$. 
    \item Finally, for $\lie g = \lie{so}_{2n}$ (of type {\tt D}$_n$), $\alpha_{i,j}$ as before, $\alpha_{i, \overline{j}} := \alpha_ i + \cdots + \alpha_{n-2} + \alpha_{j} + \cdots + \alpha_n,$ the set of positive roots is given by $R_{{\tt D}_n}^+= \{\alpha_{i,j}, \alpha_{k,\overline{l}} \mid 1\leq i \leq j\leq n-1, \,\, 1\leq k<l\leq n\}$.
\end{itemize}
For an element $\beta = \sum a_i \alpha_i \in \sum \mathbb{Z} \alpha_i$ we denote $\supp \beta = \{ i \mid a_i \neq 0\}$ and further say $\alpha$ and $\beta$ are linked, if $\supp \alpha + \beta$ defines a connected subdiagram of the Dynkin diagram.

For $\beta\in R^+$, let $e_{\beta}$ and $f_{\beta}$ denote fixed basis elements in the root spaces of $\lie g$ of weights $\beta$ and $-\beta$ respectively. We will use short notation $e_{i,j}$ resp. $f_{i, \overline{j}}$ for $e_\beta$ and $f_\beta$ where $\beta = \alpha_{i,j}$ or $\beta = \alpha_{i, \overline{j}}$. For $\beta\in R^+$, $h_{\beta}$ will denote the corresponding co-root.\\

Let the set of (dominant) integral weights of $\lie g$ be denoted by $P$ (resp. $P^+$). The fundamental weights are denoted $\{ \omega_1, \ldots, \omega_n\}$, they generate the monoid $P^+$. 
For $\lambda \in P^+$,  we denote by $V(\lambda)$, the simple, finite-dimensional highest weight module of highest weight $\lambda$, with a highest weight vector $v_\lambda$.\\
For Lie algebra $\lie a$, let $U(\lie a)$ denote the universal enveloping algebra. Let $(z_1, \ldots, z_l)$ be an ordered basis of $\lie a$, then the PBW filtration on $U(\lie a)$ is defined as
\[
U(\lie a)_s := \langle z_{i_1} \cdots z_{i_r} \mid r \leq s, z_{i_j} \in \lie a\rangle_{\mathbb{C}}.
\]
The famous PBW theorem states that the associated graded algebra is the symmetric algebra $S(\lie a)$. On a cyclic $U(\lie a)$-module $M$ with generator $m \in M$, one has the induced filtration $M_s := U(\lie a)_s \cdot m$ and the associated graded module $\text{gr } M$ is a cyclic module for $S(\lie n^-)$.

We denote by $W$, the Weyl group of $\lie g$, it is generated by the simple reflections $s_i$ corresponding to the simple roots $\alpha_i$. 
The Weyl group acts on the weights in $P$ and on the weights of $V(\lambda)$, leaving the dimension of the weight spaces invariant. 
For $w\in W$, the extremal weight space of weight $w(\lambda)$ in $V(\lambda)$ is therefore one-dimensional and we denote $v_{w(\lambda)}$ a basis element in this weight space. \\
Let $\lie b = \lie n^+ \oplus \lie h$ be a Borel subalgebra of $\lie g$. 
The Demazure module corresponding to $w\in W$ is defined to be the $\lie b$-module $V_w(\lambda):= U(\lie b)\cdot v_{w(\lambda)}$ generated by the element $v_{w(\lambda)}$. 
Notice that when $w$ is the longest element in $W$, the vector space $V_w(\lambda)$ coincides with $V(\lambda)$.\\

Now let $\lie g$ be of type $X$ and $\tau$ an automorphism of the Dynkin diagram, $\lie g^{\tau}$ be the fixed point algebra of type $X^{\tau}.$ 
Let $R_{ X}^-$ and $R_{X^{\tau}}^-$ denote negative roots of $\lie g$ and $\lie g^{\tau}$ respectively. In particular, we will consider the following automorphisms and fixed point algebras. The automorphism of type:
\begin{enumerate}
    \item[(a)] ${\tt A}_{2n-1}$, and corresponding fixed point algebra which is of type {\tt C}$_n$.
    \item[(b)] ${\tt D}_{n+1}$, and corresponding fixed point algebra being of type {\tt B}$_n$.
\end{enumerate}
\bigskip

We introduce two partial orders on $R^-$. The first one is classical:
\[
\beta < \gamma :\Leftrightarrow \gamma - \beta \in \sum \mathbb{Z}_{\geq 0} \alpha_i.
\]
Let $\delta \in R^- + R^-$, then
\[
\mathcal{P}(\delta) := \begin{cases} (\delta, 0) \quad\text{ if } -\delta \text{ is the highest root of a Dynkin subdiagram, } \\ \{(\beta_1, \beta_2) \in (R^- \cup \{ 0\})^2 \mid \beta_1 + \beta_2 = \delta \} \,\, \text{ else}. \end{cases}
\]
We define a partial order $\leq$ on $\mathcal{P}(\delta)$ by
\[
(\beta_1, \beta_2) \leq (\gamma_1, \gamma_2) :\Leftrightarrow \gamma_1 \leq \beta_1, \beta_2 \leq \gamma_2.
\]

For clarity, we include here a short example of the above partial order on $\mathcal{P}(\delta)$. In type {\tt A}$_3$, consider the pair of negative roots $(\alpha_{1,2}, \alpha_{2,3})$, then one has $(\alpha_{1,3}, \alpha_{2,2}) \in \mathcal{P}(\alpha_{1,2}+ \alpha_{2,3})$, and $(\alpha_{1,2}, \alpha_{2,3}) < (\alpha_{1,3}, \alpha_{2,2})$. Consider type {\tt C}$_3$ and the pair $(\alpha_{1,\overline{2}},\alpha_{1,3})$, then $(\alpha_{1,\overline{2}},\alpha_{1,3}) < (\alpha_{1,\overline{1}},\alpha_{2,3})\in \mathcal{P}(\alpha_{1,\overline{2}}+\alpha_{1,3})$.

\begin{definition}
Let $(\beta_1, \beta_2) \in R^- \times R^-$, then we define the $\textbf{join}$ of $\beta_1$ and $\beta_2$:
\[
\beta_1 \oplus \beta_2 := \{(\gamma_1, \gamma_2) \mid (\gamma_1, \gamma_2) \in \mathcal{P}(\beta_1 + \beta_2) \mid (\beta_1 , \beta_2) < (\gamma_1, \gamma_2)\}.
\]
By abuse of notation, we denote $\beta_1 \oplus \beta_2$ also the set of roots in $\beta_1 \oplus \beta_2$.
\end{definition}

\begin{example}\label{exam-join}
To explain the join of two roots, we work it out for types {\tt A} and {\tt C}:
\begin{enumerate}
    \item Suppose $\beta_1 = \alpha_{i_1, i_2}, \beta_2 = \alpha_{j_1, j_2}$ with $i_1 \leq j_1, i_2, j_2 \in I$, then: 
    \[
        \beta_1 \oplus \beta_2 = \begin{cases}  \emptyset & \text{ if } i_1 = j_1 \text{ or } i_2 < j_1-1 \text{ or } j_2 \leq i_2, \\
        \{ (\alpha_{i_1, j_2}, \alpha_{j_1, i_2} )\} & \text{ if } j_1 -1 \leq i_2 < j_2.
    \end{cases}
    \]
    \item Suppose $\beta_1 = \alpha_{i_1, \overline{i}_2}, \beta_2 = \alpha_{j_1, \overline{j}_2}$ with $i_1 \leq j_1, i_2, j_2 \in I$, then:
    \[
        \beta_1 \oplus \beta_2 = \begin{cases}  \emptyset & \text{ if } i_2 < j_1, \\ 
        \{ (\alpha_{i_1, \overline{j}_1}, \alpha_{j_2, \overline{i}_2}), (\alpha_{i_1, \overline{j}_2}, \alpha_{j_1, \overline{i}_2})\} & \text{ if } j_1 \leq j_2 < i_2,\\
        \{ (\alpha_{i_1, \overline{j}_1}, \alpha_{i_2, \overline{j}_2}) \} & \text{ if } j_1 \leq i_2 \leq j_2.\\
    \end{cases}
    \]
    \item For $\beta_1 = \alpha_{i_1, i_2}, \beta_2 = \alpha_{j_1, \overline{j}_2}$ with $i_1\leq j_1, i_2, j_2 \in I$, one has:
    \[\beta_1 \oplus \beta_2 = \begin{cases}  \emptyset & \text{ if } i_1 \leq i_2 < j_1-1\leq j_2, \\ 
    \{(\alpha_{i_1, \overline{j}_1}, \alpha_{j_2,i_2}),(\alpha_{i_1, \overline{j}_2}, \alpha_{j_1, i_2})\} & \text{ if } j_1 \leq j_2 \leq i_2,\\
        \{(\alpha_{i_1, \overline{j}_2}, \alpha_{j_1, i_2}), (\alpha_{i_1,\overline{j}_1}, 0)\} & \text{ if } j_1 \leq i_2 = j_2-1,\\
        \{(\alpha_{i_1, \overline{j}_2}, \alpha_{j_1, i_2})\} & \text{ if } j_1 \leq i_2 < j_2-1,\\
        \{(\alpha_{i_1,\overline{j}_2}, 0)\} & \text{ if } i_1 \leq i_2 = j_1-1\leq j_2. 
    \end{cases}
    \]
    
\item Lastly, for $\beta_1 = \alpha_{i_1, i_2}, \beta_2 = \alpha_{j_1, \overline{j}_2}$ with $j_1 < i_1, i_2, j_2 \in I$, we have:
    \[\beta_1 \oplus \beta_2 = \begin{cases}  \emptyset & \text{ if } i_1 \leq i_2 < j_2-1, \\ 
    \{(\alpha_{j_1, \overline{i}_1}, \alpha_{j_2, i_2})\} & \text{ if } i_1 \leq j_2 \leq i_2 \text{ or } j_1 \leq j_2 < i_1 \leq i_2,\\
        \{(\alpha_{j_1, \overline{i}_1}, 0)\} & \text{ if } i_1 \leq i_2 = j_2-1.
    \end{cases}
    \]
\end{enumerate}
\end{example}
\bigskip

The second order on $R^-$ is defined as follows: \\
Let $\alpha = \sum a_i \alpha_i, \beta = \sum b_i \alpha_i \in R^-$. Let $s = \text{min} \{i \mid a_i \neq 0\}$. We define $\alpha \succ \beta$ if 
\[
\beta = \alpha + \alpha_s \text{ and } a_s = 1
\] 
or
\[
\beta = \alpha - \alpha_j \text{ for some } j  \geq s.
\] 
The transitive closure of these cover relations defines an order $\prec$ on $R^-$. 
\begin{example}
Two examples to explain the order:
\begin{enumerate}
    \item In type ${\tt A}$: $\alpha_{i,j} \succ \alpha_{k,\ell} \Leftrightarrow i \leq k, j \leq \ell$.
    \item In type ${\tt C}$: $\alpha_{i,\overline{j}} \succ \alpha_{k,\overline{\ell}} \Leftrightarrow i \leq k, j \geq \ell$.
\end{enumerate}
\end{example}
We note here, that if $\beta_1 \succ \beta_2$, then $\gamma_1$ and $\gamma_2$ are non-comparable with respect to $\succ$ for all $(\gamma_1, \gamma_2) \in \beta_1 \oplus \beta_2$. The motivation for the second order is due to the definition of a Dyck path for Lie algebras of type ${\tt A}$ or ${\tt C}$ from \cite{FFL11a, FFL11b}:
\begin{definition}
A Dyck path $\mathbf{p}$ is a sequence of positive roots $(\beta_1, \ldots, \beta_s)$ such that
\begin{itemize}
    \item $\beta_1$ is a simple root $\alpha_i$, we denote $s(p) = i$.
    \item $\beta_s$ is either a simple root $\alpha_i$ or additionally in type {\tt C} equals $\alpha_{i,  \overline{i}}$, we denote $e(p) = i$ or $e(p) = n$ (in the latter case).
    \item For all $1 \leq i < s$: $\beta_{i} \succ \beta_{i+1}$.
\end{itemize}
\end{definition}

\section{FFLV-admissible sets and fixed point subalgebras}\label{sec:admissible}
We will introduce the main combinatorial object of our study. We will explain in Section~\ref{sec:application}, how this definition is motivated by the study of certain monomial bases of simple, finite-dimensional modules.
\begin{definition}\label{def:admissible}
A subset $A \subseteq R^-$ is called \textbf{FFLV-admissible} (short \textbf{admissible}) if 
\begin{enumerate}
    \item $\lie n_A = \bigoplus_{\alpha \in A} \lie g_{\alpha}$ is a Lie subalgebra.
    
    \item For all pairs $\alpha \succeq \beta \in A$ one has $\alpha \oplus \beta \in A$.
\end{enumerate}
\end{definition}

We note, that if $A$ is admissible, then 
 $$\lie g_{A} := \lie h \bigoplus_{\alpha \in A} \lie g_\alpha$$ is a Lie subalgebra in $\lie g$. 

\begin{example}\label{exa:admissible}
We provide a short list of examples here, and refer to more example classes  to Section~\ref{sec:triangular}:
\begin{enumerate}
    \item There are two trivial admissible subsets, $\emptyset$ and $R^-$.
    \item The next obvious examples to be considered are subsets with one root $A = \{\alpha\}$. 
    Then $A$ is admissible if and only if $\alpha$ is the highest root of the subdiagram:
    \begin{enumerate}
        \item In type ${\tt A}$, every subset $\{ \alpha \}$ is admissible.
        \item In type ${\tt C}$, $\{\alpha\}$ is admissible if and only if $\alpha = \alpha_{i,j}$ with $j < n$ or $\alpha = \alpha_{i, \overline{i}}$.
    \end{enumerate}
    \item In type ${\tt C}$, the minimal admissible set containing $\alpha_{i, \overline{j}}$ with $i \neq j$ can be read off from Example~\ref{exam-join}, it is $\{\alpha_{i, \overline{j}}, \alpha_{i, \bar{i}}, \alpha_{j,\bar{j}}\}$.
    \item Slightly larger, non-trivial examples are the subsets given by the roots appearing in the following grid for type ${\tt A}_5$ and type ${\tt C}_3$ respectively.\\

\begin{minipage}{0.45\textwidth}
{\tiny\begin{tikzcd}
\bullet\arrow[r]&\alpha_{1,2}\arrow[d]\arrow[r]&\bullet\arrow[d] \arrow[r]&\alpha_{1,4}\arrow[d]\arrow[r]&\alpha_{1,5}\arrow[d]\\
                     &\alpha_{2,2}         \arrow[r]&\bullet\arrow[d] \arrow[r]&\alpha_{2,4}\arrow[d]\arrow[r]&\alpha_{2,5}\arrow[d]\\
                     &                              &\bullet        \arrow[r]&\bullet\arrow[d]\arrow[r]&\bullet\arrow[d]\\
                     &                              &                               &\alpha_{4,4}         \arrow[r]&\alpha_{4,5}\arrow[d]\\
                     &                              &                               &                              &\bullet
\end{tikzcd}}
\end{minipage}
\begin{minipage}{0.4\textwidth}
{\tiny\begin{tikzcd}
\bullet\arrow[r] & \alpha_{1,2}\arrow[d] \arrow[r] & \bullet\arrow[d] \arrow[r]& \alpha_{1,\overline{2}}\arrow[d]\arrow[r]& \alpha_{1,\overline{1}}\\
& \alpha_{2,2} \arrow[r] & \arrow[d] \bullet\arrow[r]& \alpha_{2,\overline{2}}\\
                                  & & \bullet
\end{tikzcd}}
\end{minipage}
\end{enumerate}
\end{example}

\begin{proposition}
Let $A \subset R^-$ be admissible and $S\subset A$. Let $\supp S = \bigcup_{\alpha \in S} \supp \alpha$ and $S_i \subset S$ be a connected component. Then $\theta_{S_i}$, the highest root of the Dynkin subdiagram of $S_i$ belongs to $A$.
\end{proposition}
\begin{proof}
The statement follows for roots of the form $\alpha_{i,j}$ with $i,j \in I$ straight from the definition. Now, let $\beta \in A$ and suppose $\beta \oplus \beta \neq \emptyset$. Let $S = \supp \beta$, then $\theta_S \in \beta \oplus \beta$. 
\end{proof}

\subsection{Fixed point subalgebras and admissible sets}
We discuss here how admissible sets and fixed point sets of Dynkin automorphisms are compatible. Recall the Lie algebra $\lie g$ of type $X$ and the corresponding fixed point algebra $\lie g^{\tau}$ of type $X^{\tau}$ for a fixed Dynkin automorphism $\tau$. We have two notions of admissibility for $\lie g^{\tau}$: the first one is given by Definition \ref{def:admissible}. For the second one, we denote for a $\tau$-invariant $A \subset R^{-}_{X}$ the set of orbits by $A^{\tau} \subset R^{-}_{X^{\tau}}$. Recall that every subset of $R^{-}_{X^{\tau}}$ is of the form $A^{\tau}$ for a unique $A\subset R^{-}_{X}$.

\begin{definition}
We call $A^{\tau} \subset R^{-}_{X^{\tau}}$ \textbf{induced admissible} if $A \subset R^{-}_{X}$ is admissible.
\end{definition}

The following proposition gives the equivalence of the two notions:
\begin{proposition}\label{prop-adm-ind}
Let $A^{\tau} \subset R_{X^{\tau}}^-$, then $A^{\tau}$ is admissible if and only if $A^{\tau}$ is induced admissible.
\end{proposition}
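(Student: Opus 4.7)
The plan is to verify that both admissibility axioms transfer under passage to $\tau$-orbits, establishing the equivalence in both directions. A convenient way to proceed is to treat the Lie subalgebra condition and the join closure condition separately, in each case exploiting the fact that $\tau$ is an automorphism of $\lie g$ and thus commutes with the bracket and with root addition.

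First I would handle condition (1). For any $\tau$-invariant $A \subset R^-_X$, there is a canonical identification $\lie n_{A^\tau} \cong (\lie n_A)^\tau$ as vector spaces (each element of $A^\tau$ corresponds to a root of $\lie g^\tau$ whose root space is spanned by the $\tau$-symmetrisation of the root vectors in the corresponding orbit). Since $\tau$ is a Lie algebra automorphism, if $\lie n_A$ is closed under the bracket, so is $(\lie n_A)^\tau = \lie n_{A^\tau}$. For the converse, I would use that $\lie n_A$ being a subalgebra amounts to combinatorial closure of $A$ under addition of roots inside $R^-_X$: given $\alpha, \beta \in A$ with $\alpha + \beta \in R^-_X$, apply $\tau$ and project; the orbit $[\alpha] + [\beta]$ lies in $R^-_{X^\tau}$ exactly when a representative sum lies in $R^-_X$, so closure of $A^\tau$ under addition plus $\tau$-invariance of $A$ forces $\alpha + \beta \in A$.

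Next I would establish the key combinatorial fact underlying condition (2): the join in $X^\tau$ matches the $\tau$-orbit of joins in $X$. Concretely, for $[\alpha], [\beta] \in A^\tau$ with $[\alpha] \succeq [\beta]$, and any representatives $\alpha, \beta \in A$, I claim
\[
[\alpha] \oplus [\beta] \;=\; \bigl\{[\gamma_1] \;\big|\; (\gamma_1,\gamma_2) \in \alpha' \oplus \beta' \text{ for some representatives } \alpha',\beta'\bigr\}.
\]
This is the technical heart of the proof. For the ${\tt A}_{2n-1} \to {\tt C}_n$ case it is checked by comparing cases (1) of Example~\ref{exam-join} (type {\tt A}) with cases (2)--(4) (type {\tt C}): each type {\tt C} join, including the two-element joins $\{(\alpha_{i_1,\overline j_1}, \alpha_{j_2,\overline i_2}), (\alpha_{i_1,\overline j_2}, \alpha_{j_1,\overline i_2})\}$, is precisely the $\tau$-orbit of an {\tt A}-type join. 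An analogous matching handles the ${\tt D}_{n+1} \to {\tt B}_n$ case.

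Once the matching is in place, condition (2) transfers in both directions: if $A$ is admissible, then for any comparable pair in $A^\tau$ the joins in $X^\tau$ are orbits of joins in $X$ which lie in $A$, hence lie in $A^\tau$; conversely, if $A^\tau$ is admissible, a comparable pair in $A$ projects to a comparable pair in $A^\tau$, whose joins pull back via $\tau$-invariance of $A$. The main obstacle is the combinatorial case-by-case verification of the matching, in particular handling the situations in type {\tt C} where a join is two-element or involves $\alpha_{i,\overline i}$, and the analogous short/long root interactions in the ${\tt D} \to {\tt B}$ reduction; all other steps are formal consequences of $\tau$ being a diagram automorphism.
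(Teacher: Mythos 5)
Your proposal is correct in substance, but it is organized around a different key lemma than the paper's proof. The paper never separates the two admissibility conditions (it silently treats only the join condition) and does not prove your matching identity inside this proposition; instead it argues purely order-theoretically: for the forward direction it chooses the \emph{minimal} representatives $\beta_1, \beta_2, \gamma_1, \gamma_2$ of the orbits with respect to $\succ$ and checks that the defining data of the join, namely $\gamma_1 + \gamma_2 = \beta_1 + \beta_2$ together with $\gamma_1 \leq \beta_1$, $\beta_2 \leq \gamma_2$, pass back and forth between orbits and these distinguished representatives; for the converse it splits according to whether $\beta_1, \beta_2$ are both minimal, both maximal, or minimal/maximal in their orbits. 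This avoids any case table. Your route instead establishes the stronger identity $\alpha^{\tau} \oplus \beta^{\tau} = \{\gamma_1 \oplus \gamma_2 \mid \gamma_1 \in \alpha^{\tau}, \gamma_2 \in \beta^{\tau}\}^{\tau}$ by inspection of Example~\ref{exam-join} and then deduces both directions formally; notably, this identity is exactly what the paper asserts later in the proof of Proposition~\ref{prop-c-grid}, so your lemma is needed downstream anyway, and your explicit handling of condition (1) fills in a step the paper leaves implicit. What the paper's argument buys is uniformity: it covers the ${\tt D}_{n+1} \to {\tt B}_n$ folding without any join tables, whereas your case check is only actually available for ${\tt A}_{2n-1} \to {\tt C}_n$ from Example~\ref{exam-join}; the type ${\tt B}$/${\tt D}$ tables you would need appear nowhere in the paper, so for case (b) your ``analogous matching'' is a promissory note rather than a proof. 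Two small points to tighten: your matching display should record both components of each pair (the join is a set of pairs, and only by abuse of notation a set of roots), and both of your directions use without proof that $\succeq$-comparability transfers between representatives and their orbits --- the paper's minimal-representative device is precisely its mechanism for controlling this, so you should either verify the transfer or adopt that device.
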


\begin{proof}
Let $A = \tau(A)$ be admissible and $\beta_1^{\tau}, \beta_2^{\tau} \in A^{\tau}$ with $\beta_1^{\tau} \succeq \beta_2^{\tau}$. We choose minimal representatives $\beta_1, \beta_2$ with respect to $\succ$. Suppose $(\gamma_1^{\tau}, \gamma_2^{\tau}) \in \beta_1^{\tau} \oplus \beta_2^{\tau}$, e.g. $\gamma_1^{\tau} + \gamma_2^{\tau} = \beta_1^{\tau} + \beta_2^{\tau}$ and $\gamma_1^{\tau} \leq \beta_1^{\tau}, \beta_2^{\tau} \leq \gamma_2^{\tau}$. We choose minimal representatives $\gamma_1, \gamma_2$, then 
\[
\gamma_1+ \gamma_2 = \beta_1 + \beta_2 \text{ and } \gamma_1 \leq \beta_1, \beta_2 \leq \gamma_2
\]
and hence $(\gamma_1, \gamma_2) \in  \beta_1 \oplus \beta_2$. $A$ is admissible and and $\tau$ invariant, so $(\gamma_1^{\tau}, \gamma_2^{\tau}) \in \beta_1^{\tau} \oplus \beta_2^{\tau}$.\\
Suppose $A = \tau(A)$ and $A^{\tau}$ is admissible. Similarly as before, one shows for $\beta_1, \beta_2 \in A$ that $\beta_1 \oplus \beta_2 \subset A$ if both are the minimal or both are the maximal elements in their orbit. We are left with $\beta_1$ minimal and $\beta_2$ maximal, $\beta_1 \succ\beta_2$ and suppose $(\gamma_1, \gamma_2) \in \beta_1 \oplus \beta_2$. Then $\gamma_1^{\tau} + \gamma_2^{\tau} = \beta_1^{\tau} + \beta_2^{\tau}$ and $\gamma_1^{\tau} \leq \beta_1^{\tau}, \beta_2^{\tau} \leq \gamma_2^{\tau}$ as those properties are true for $\gamma_1, \gamma_2$. Since $A^{\tau}$ is admissible, $\{\gamma_1^{\tau}, \gamma_2^{\tau}\} \in A^{\tau}$ and so $\{\gamma_1, \gamma_2\} \in A$.
\end{proof}

\subsection{Saturated sets}
\begin{definition}
For a subset $S \subset R^-$, we define the \textbf{join closure} $\overline{S}^{\oplus}$ of $S$ to be the minimal subset in $R^-$, containing $S$ and for any two roots $\alpha, \beta \in \overline{S}^{\oplus}$, $\alpha \oplus \beta \subset S$.
\end{definition}
This is certainly well-defined and a non-trivial subset, for example in type ${\tt A}$, if $S= \{\alpha \}$, then $S = \overline{S}^{\oplus}$ while the closure of the set of all (negative) simple roots is $R^-$. Admissible subsets are by definition closed under join.\\
We consider the join closure of particular subsets:
\begin{proposition}\label{prop-a-grid}
Let $\lie g = \lie{sl}_{n+1}$ and $\mathbf{p} = (\alpha_{i_1,j_1}, \ldots, \alpha_{i_s, j_s})$ a Dyck path, such that consecutive roots are linked. Then \[
\overline{\mathbf{p}}^{\oplus} = \{ \alpha_{i_k, j_\ell} \mid 1 \leq k, \ell \leq n \}.
\]
\end{proposition}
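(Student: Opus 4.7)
The plan is to show the two inclusions between $\overline{\mathbf{p}}^\oplus$ and the putative set. Write $S := \{\alpha_{i_k, j_\ell} \mid 1 \leq k, \ell \leq s,\ i_k \leq j_\ell\}$; then $\mathbf{p} \subseteq S$ is automatic (take $k = \ell$, noting $i_k \leq j_k$).

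For the inclusion $\overline{\mathbf{p}}^\oplus \subseteq S$, I will check that $S$ is closed under join by direct appeal to Example~\ref{exam-join}(1). Given $\alpha_{i_k, j_\ell}, \alpha_{i_{k'}, j_{\ell'}} \in S$ with $i_k \leq i_{k'}$, the join is either empty or, in the case $i_{k'} - 1 \leq j_\ell < j_{\ell'}$, equals $\{(\alpha_{i_k, j_{\ell'}}, \alpha_{i_{k'}, j_\ell})\}$. The first component lies in $S$ because $i_k \leq j_\ell < j_{\ell'}$; the second either lies in $S$ (when $i_{k'} \leq j_\ell$) or is the zero contribution (when $i_{k'} = j_\ell + 1$).

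For the reverse inclusion $S \subseteq \overline{\mathbf{p}}^\oplus$, I will build each $\alpha_{i_k, j_\ell} \in S$ by iterated joins along $\mathbf{p}$, distinguishing the cases $k \leq \ell$ and $k > \ell$. In the first case, induct on $m$ from $k$ up to $\ell$: the base $m = k$ is $\beta_k \in \mathbf{p}$, and at each step I join the available $\alpha_{i_k, j_m}$ with $\beta_{m+1} = \alpha_{i_{m+1}, j_{m+1}}$. The linkedness hypothesis $i_{m+1} \leq j_m + 1$ is exactly what keeps the pair in the non-empty branch of Example~\ref{exam-join}(1), producing $\alpha_{i_k, j_{m+1}}$; the degenerate sub-cases $i_k = i_{m+1}$ or $j_m = j_{m+1}$ leave nothing to do. The case $k > \ell$ is handled by the symmetric induction, this time starting from $\beta_\ell$ and pushing the first index up from $i_\ell$ to $i_k$, where the standing hypothesis $i_k \leq j_\ell$ keeps all intermediate roots valid.

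The main obstacle, and the reason for the linkedness assumption, is controlling when the join formula produces the empty set. Without linkedness a gap $i_{m+1} > j_m + 1$ somewhere along the path would break the induction by making the relevant join trivial at that step. The proof is thus essentially a careful combinatorial bookkeeping verifying that linkedness is precisely strong enough to propagate an $\alpha_{i_k, j_\ell}$ through the path, both along the $j$-direction and along the $i$-direction.
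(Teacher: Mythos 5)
Your proof is correct and follows essentially the same route as the paper's: generate the full grid $\{\alpha_{i_k,j_\ell}\}$ by iterated joins of consecutive path roots (where linkedness rules out the empty branch of Example~\ref{exam-join}(1)), then verify closure under join directly from that same example. Your write-up is merely more explicit than the paper's --- it spells out the two inductions ($k\leq\ell$ pushing the end index, $k>\ell$ pushing the start index via the second join component) and the degenerate sub-cases that the paper compresses into ``iterating this procedure gives the proposed set,'' and it correctly makes explicit the constraint $i_k\leq j_\ell$ left implicit in the statement.
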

\begin{proof}
Let $1 \leq k < n$, then
\[
\alpha_{i_k, j_k} \oplus \alpha_{i_{k+1}, j_{k+1}} = \{ \alpha_{i_k, j_{k+1}}, \alpha_{j_k, i_{k+1}} \}.
\]
Iterating this procedure gives the proposed set. We are left to show, that this is closed under join: consider $\alpha_{i_k, j_\ell}, \alpha_{i_r, j_t}$ with $i_k \leq i_r$ and $i_k, i_r \in \{i_1, \ldots, i_s\}, j_\ell, j_t \in \{j_1, \ldots, j_s\}$. Then by Example~\ref{exam-join} we see that
\[
\alpha_{i_k, j_\ell} \oplus \alpha_{i_r, j_t} = \emptyset
\]
unless $i_r - 1 \leq j_\ell < j_t$, while in this case
\[
\alpha_{i_k, j_\ell} \oplus \alpha_{i_r, j_t} = \{ \alpha_{i_k, j_t}, \alpha_{j_\ell, i_r}\}
\]
which is in the proposed set.
\end{proof}

\begin{proposition}\label{prop-c-grid}
Let $\lie g = \lie{sp}_{2n}$ and $\mathbf{p}$ be a Dyck path, such that consecutive roots are linked, then 
\[
\overline{\mathbf{p}}^{\oplus} =  (\overline{T_{\mathbf{p}}}^{\oplus})^{\tau} \text{ where } 
T_{\mathbf{p}} = \bigcup_{\alpha^{\tau} \in \mathbf{p}} \{\gamma  \in \alpha^{\tau} \} \subset R^-_{{\tt A}_{2n-1}}.
\]
\end{proposition}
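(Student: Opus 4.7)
My plan is to deduce the type-\texttt{C} identity from its type-\texttt{A} counterpart, Proposition~\ref{prop-a-grid}, by folding, using Proposition~\ref{prop-adm-ind} as the bridge. The crucial preliminary observation is that $T_{\mathbf{p}}$ is $\tau$-invariant by construction (it is a union of full $\tau$-orbits), and the join operation $\oplus$ in type \texttt{A} is $\tau$-equivariant, since $\tau$ is an automorphism of the root system preserving the classical order $\leq$ used to define $\mathcal{P}(\delta)$. Iterating a $\tau$-equivariant operation starting from a $\tau$-invariant set produces a $\tau$-invariant set, so $\overline{T_{\mathbf{p}}}^{\oplus}$ is $\tau$-invariant and the orbit projection $(\overline{T_{\mathbf{p}}}^{\oplus})^{\tau}$ is well-defined.

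For the inclusion $\overline{\mathbf{p}}^{\oplus} \subseteq (\overline{T_{\mathbf{p}}}^{\oplus})^{\tau}$, I would invoke Proposition~\ref{prop-adm-ind} directly: the set $\overline{T_{\mathbf{p}}}^{\oplus}$ is $\tau$-invariant and admissible in type \texttt{A}, hence $(\overline{T_{\mathbf{p}}}^{\oplus})^{\tau}$ is (induced) admissible in type \texttt{C}, so in particular closed under the type-\texttt{C} join. Since it contains the orbits of the roots of $T_{\mathbf{p}}$, namely the Dyck path $\mathbf{p}$ itself, minimality of $\overline{\mathbf{p}}^{\oplus}$ forces the inclusion.

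For the reverse inclusion $(\overline{T_{\mathbf{p}}}^{\oplus})^{\tau} \subseteq \overline{\mathbf{p}}^{\oplus}$, I would proceed by induction on the step at which a root first appears in $\overline{T_{\mathbf{p}}}^{\oplus}$. At step zero the claim reduces to $T_{\mathbf{p}}^{\tau} = \mathbf{p}$, which is immediate from the definition of $T_{\mathbf{p}}$. At the inductive step, whenever $(\gamma_1, \gamma_2) \in \delta_1 \oplus \delta_2$ in type \texttt{A} for previously constructed $\delta_1, \delta_2$, the inductive hypothesis gives $\delta_i^{\tau} \in \overline{\mathbf{p}}^{\oplus}$; it then suffices to show the orbit version of compatibility, namely that each of $\gamma_1^{\tau}, \gamma_2^{\tau}$ can be reached by (possibly iterated) type-\texttt{C} joins starting from $\delta_1^{\tau}$ and $\delta_2^{\tau}$. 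This compatibility is essentially the content of the second half of the proof of Proposition~\ref{prop-adm-ind}, adapted here with the roles of representatives reversed.

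The main obstacle will be making the compatibility in the previous paragraph precise when $\delta_1$ and $\delta_2$ lie in different $\tau$-orbits: in that situation the type-\texttt{A} join $\delta_1 \oplus \delta_2$ may produce up to two pairs, whose orbit projections must be matched individually against the (at most three) pairs appearing in $\delta_1^{\tau} \oplus \delta_2^{\tau}$ listed in Example~\ref{exam-join}. I would isolate this into a short lemma handled by inspecting the four mixed-type cases in Example~\ref{exam-join} (both of form $\alpha_{i,j}$, both of form $\alpha_{i,\overline{j}}$, and the two mixed cases), and invoke it uniformly in the induction. Beyond this case check, no new combinatorial input should be required.
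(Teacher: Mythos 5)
Your proposal is correct and follows essentially the same route as the paper: the paper's proof rests on the single orbitwise identity $\alpha^{\tau} \oplus \beta^{\tau} = \{ \gamma_1 \oplus \gamma_2 \mid \gamma_1 \in \alpha^{\tau}, \gamma_2 \in \beta^{\tau}\}^{\tau}$, read off from Example~\ref{exam-join}, which is exactly the case-check lemma you isolate, and this identity delivers both inclusions just as in your argument. Your only deviation is cosmetic: you route the inclusion $\overline{\mathbf{p}}^{\oplus} \subseteq (\overline{T_{\mathbf{p}}}^{\oplus})^{\tau}$ through Proposition~\ref{prop-adm-ind} rather than the identity, which is fine provided you also note (as the paper does implicitly, and later explicitly in the proof of Proposition~\ref{prop-diff}) that the grid $\overline{T_{\mathbf{p}}}^{\oplus}$ of Proposition~\ref{prop-a-grid} is closed under root addition, so that it is genuinely admissible in the sense of Definition~\ref{def:admissible} and not merely join-closed.
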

\begin{proof}
Let $\alpha^{\tau}, \beta^{\tau} \in R^-$, then we can read off from Example~\ref{exam-join}:
\[
\alpha^{\tau} \oplus \beta^{\tau} = \{ \gamma_1 \oplus \gamma_2 \mid \gamma_1 \in \alpha^{\tau}, \gamma_2 \in \beta^{\tau}\}^{\tau}.
\]
Applying this to $\mathbf{p}$ yields
\[
(\overline{T_{\mathbf{p}}}^{\oplus})^{\tau} \subseteq \overline{\mathbf{p}}^{\oplus}
\]
since by construction $\tau(\overline{T_{\mathbf{p}}}^{\oplus}) = \overline{T_{\mathbf{p}}}^{\oplus}$.\\ We are left to show that the left hand side is closed under join. 
So let $\alpha^{\tau}, \beta^{\tau} \in (\overline{T_{\mathbf{p}}}^{\oplus})^{\tau}$, then the first equation in the proof shows that $\alpha^{\tau} \oplus \beta^{\tau} \subset (\overline{T_{\mathbf{p}}}^{\oplus})^{\tau}$, because $\overline{T_{\mathbf{p}}}^{\oplus}$ is closed under join.
\end{proof}
The following will be a crucial observation in the proof of the main Theorem~\ref{thm:main}.
\begin{proposition}\label{prop-diff}
Let $\lie g = \lie{sl}_{n+1}, \lie{sp}_{2n}$ and let $\mathbf{p}$ be a Dyck path, such that consecutive roots are linked. Let $\alpha, \beta \in \overline{\mathbf{p}}^{\oplus}$ and suppose $\gamma = \alpha - \beta \in R^+$. Then 
\[
(\overline{\mathbf{p}}^{\oplus} + \gamma) \cap R^- \subset \overline{\mathbf{p}}^{\oplus}.
\]
\end{proposition}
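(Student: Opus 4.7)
The strategy is to combine the explicit descriptions of $\overline{\mathbf{p}}^{\oplus}$ given in Propositions~\ref{prop-a-grid} and \ref{prop-c-grid} with a case analysis on the form of $\gamma$, and then reduce type~{\tt C} to type~{\tt A}.

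\textbf{Type {\tt A}.} By Proposition~\ref{prop-a-grid}, I identify $\overline{\mathbf{p}}^{\oplus}$ with the grid $\{\alpha_{i_k,j_\ell}\}$ (writing positive-root labels for clarity). Since $\alpha-\beta$ lies in $R^+$ with $\alpha,\beta$ coming from the grid, a direct inspection of type~{\tt A} root differences forces $\gamma$ to take exactly one of two shapes: either $\alpha$ and $\beta$ share their first index, so $\gamma$ is an ``upward column interval'' $\alpha_{j_b+1,\,j_d}$, or they share their last index, so $\gamma$ is a ``leftward row interval'' $\alpha_{i_a,\,i_c-1}$. For each of these two shapes, I then go through the possibilities for $\delta=-\alpha_{r,s}$ in $\overline{\mathbf{p}}^{\oplus}$ such that $\delta+\gamma\in R^-$: in each instance, $\delta+\gamma$ is a root of the form $-\alpha_{r',s'}$, and the task reduces to verifying that $r'\in\{i_1,\ldots,i_s\}$ and $s'\in\{j_1,\ldots,j_s\}$. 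This verification is carried out by tracing through the Dyck path, using the linked condition $i_{k+1}\le j_k+1$ to rule out an ``escape'' from the index sets.

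\textbf{Type {\tt C}.} I reduce to type~{\tt A} via Proposition~\ref{prop-c-grid}, which realises the type~{\tt C} closure as the $\tau$-orbit set $(\overline{T_{\mathbf{p}}}^{\oplus})^{\tau}$ of a type~$A_{2n-1}$ closure that itself comes from a linked Dyck path. Given $\alpha,\beta,\delta$ in $\overline{\mathbf{p}}^{\oplus}$ with $\gamma=\alpha-\beta\in R^+$, I choose lifts $\widetilde\alpha,\widetilde\beta,\widetilde\delta\in \overline{T_{\mathbf{p}}}^{\oplus}$ so that $\widetilde\gamma=\widetilde\alpha-\widetilde\beta$ projects to $\gamma$ and $\widetilde\delta+\widetilde\gamma$ projects to $\delta+\gamma$; this is possible because $\oplus$ commutes with $\tau$, a fact already exploited in the proof of Proposition~\ref{prop-c-grid}, and the minimal-representative choice behind Proposition~\ref{prop-adm-ind} tells me which lifts to pick. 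The type~{\tt A} case then places $\widetilde\delta+\widetilde\gamma$ inside $\overline{T_{\mathbf{p}}}^{\oplus}$, and applying $\tau$ gives $\delta+\gamma\in\overline{\mathbf{p}}^{\oplus}$.

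\textbf{Main obstacle.} The bulk of the work is the type~{\tt A} case analysis, specifically controlling the subcases where $\delta+\gamma$ threatens to introduce a ``new'' index outside $\{i_1,\ldots,i_s\}\cup\{j_1,\ldots,j_s\}$. The heart of the matter is to show that, whenever such an index would be required to place $\delta+\gamma$ in the grid, one of the conditions defining a root or the linked hypothesis forces $\delta+\gamma\notin R^-$ instead, so the statement is vacuous in that subcase. In type~{\tt C} the subtlety is not computational but combinatorial: picking representatives $\widetilde\alpha,\widetilde\beta$ compatibly so that the type~{\tt A} hypothesis $\widetilde\gamma\in R^+$ actually holds, rather than just being satisfied after projection.
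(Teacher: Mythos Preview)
Your proposal is correct and follows essentially the same route as the paper's proof: in type~{\tt A} you identify $\gamma$ as one of the two interval shapes $\alpha_{i_a,\,i_c-1}$ or $\alpha_{j_b+1,\,j_d}$ and then verify grid-stability (the paper does this in one line by appealing to the grid description of $\overline{\mathbf{p}}^{\oplus}$, while you spell out the case analysis and the role of the linked hypothesis more explicitly), and in type~{\tt C} you reduce to type~{\tt A} by lifting along the $\tau$-orbit identification of Proposition~\ref{prop-c-grid}, exactly as the paper does.
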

\begin{proof}
For $\lie g = \lie{sl}_{n+1}$ and $\mathbf{p} = (\alpha_{i_1,j_1}, \ldots, \alpha_{i_s, j_s})$, $\gamma$ could be only one of the following
\[
\{ \alpha_{i_k, i_{\ell} -1}, \alpha_{j_k -1, j_\ell} \mid k < \ell \}.
\]
Observing, that $\overline{\mathbf{p}}^{\oplus}$ is the set of all roots starting in $i_k$ and ending in $j_\ell$ completes the proof in this case.\\
Turning to $\lie{g} = \lie{sp}_{2n}$, we again observe that $\overline{\mathbf{p}}^{\oplus}$ is admissible and hence induced admissible. Any difference of two roots in $\overline{\mathbf{p}}^{\oplus}$ is in fact the orbit of a difference between two roots in $\lie{sl}_{2n}$. We can therefore reduce the stability of $\overline{\mathbf{p}}^{\oplus}$ to the $\lie{sl}_{n+1}$-case.
\end{proof}

\section{Triangular Weyl group elements}\label{sec:triangular}
In this section, we consider particular subsets $A \subset R^+$. Let $w \in W$, then
\[
A_w := w^{-1}(R^+) \cap R^-.
\]
We are aiming to classify the $w \in W$, such that $A_w$ is admissible.

\begin{definition}
Let $\mathbf{i} := i_1 i_2 i_3 i_4$ with $i_j \in \{1, \ldots, 4\}$ pairwise distinct. We call $\tau \in S_{N}$ $\mathbf{i}$-avoiding (\textit{pattern avoiding}) if there is no quadruple $j_1 < j_2 < j_3 < j_4$ such that $ (\tau(j_1), \tau(j_2), \tau(j_3), \tau (j_4))$ is ordered as $\mathbf{i}$.
\end{definition}

\subsection{Type \texorpdfstring{\tt A}{A}}
The case of admissible sets originating from Weyl group elements in type {\tt  A} has been treated in \cite{Fou16}. We recall here the original definition from \cite{Fou16} and \cite{CFF20}:

\begin{definition}
Let $w \in S_{n+1}$, then $w$ is called \textbf{triangular} if $w$ does not contain patterns of the form $4231$ and $2413$.
\end{definition}

We should note here some interesting facts on triangular elements in type {\tt  A}:
\begin{enumerate}
    \item The number of triangular elements is given by the sequence A032351 in OEIS, \cite{OEIS2}. The first few elements suggest, that the number might be close to $n!$ but this is not true for large $n$.
    \item There are interesting elements in $S_n$ introduced in \cite{DY01}, which are also triangular elements as discussed in \cite{Fou16}.
    \item The ``smallest'' non-triangular element is $s_1s_2s_3$ in $S_4$.
    \item There is a subset of triangular elements called rectangular elements introduced in \cite{CFF20}. These avoid the patterns $2431$ and $4213$ in addition. They are counted by the integer sequence A006012 in OEIS, \cite{OEIS1}. 
\end{enumerate}

The following can be found in \cite[Proposition 1]{Fou16} and it explains the restriction of our focus to triangular Weyl group elements.

\begin{proposition}\label{pro:A-triangular}
Let $w \in S_{n+1}$, then $w^{-1}(R^+) \cap R^-$ is admissible if and only if $w$ is triangular.
\end{proposition}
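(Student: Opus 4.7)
The plan is to translate both admissibility conditions into combinatorics of inversions of $w$ and then match them against the forbidden patterns $2413$ and $4231$. Under the bijection sending the negative root $-\alpha_{i,j-1}$ (with $1 \leq i < j \leq n+1$) to the pair $(i,j)$, the set $A_w$ corresponds precisely to the set of inversions $\{(i,j) : i<j,\ w(i) > w(j)\}$ of $w$.

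Condition (1) of admissibility is automatic for every $w \in S_{n+1}$: in type ${\tt A}$ the only non-zero brackets of negative root vectors take the form $[f_{\alpha_{i,j-1}}, f_{\alpha_{j,k-1}}] \doteq f_{\alpha_{i,k-1}}$, and transitivity of $>$ (i.e.\ $w(i)>w(j)>w(k) \Rightarrow w(i)>w(k)$) gives the required closure of $\lie n_{A_w}$. The whole classification therefore hinges on the join condition (2). Using Example~\ref{exam-join}(1), for $(a,b),(c,d) \in A_w$ arising from two roots $\alpha \succeq \beta$ (so $a \leq c$ and $b \leq d$), the join is non-empty precisely when $a < c \leq b < d$, and admissibility then demands that $(a,d)$ lie in $A_w$ always, together with $(c,b)$ in the strict case $c < b$. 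The boundary case $c = b$ reduces immediately to transitivity of $>$.

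For the only-if direction I argue by contrapositive. A $2413$ pattern at positions $p_1<p_2<p_3<p_4$ yields inversions $(p_1,p_3)$ and $(p_2,p_4)$ fitting the generic configuration, but $(p_1,p_4)$ is not an inversion since $w(p_1)=2<3=w(p_4)$, so $A_w$ fails (2). Analogously, a $4231$ pattern at $q_1<q_2<q_3<q_4$ yields inversions $(q_1,q_3),(q_2,q_4)$ whose join requires $(q_2,q_3)$ to be an inversion, which it is not, since $w(q_2)=2<3=w(q_3)$.

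For the if direction, assume $w$ is triangular and take inversions $(a,b),(c,d)$ with $a<c<b<d$. If $(a,d)$ were not an inversion then $w(a)<w(d)$ combined with $w(a)>w(b)$ and $w(c)>w(d)$ forces $w(a)<w(c)$ (else $w(c) > w(d) > w(a) \geq w(c)$, a contradiction) and hence the total order $w(b)<w(a)<w(d)<w(c)$ --- this is exactly a $2413$ pattern read off the tuple $(a,c,b,d)$, contradicting triangularity. Analogously, if $(c,b)$ were not an inversion, $w(c)<w(b)$ combined with the other two inequalities forces $w(d)<w(c)<w(b)<w(a)$, the pattern $4231$. The main technical obstacle is precisely this last case analysis: one must verify that the two given inequalities plus the negation of a required inequality uniquely determine the full ordering of $\{w(a),w(b),w(c),w(d)\}$ and that this ordering is exactly the forbidden pattern --- a finite but delicate check once one has correctly matched the join formula of Example~\ref{exam-join}(1) with the pattern-avoidance conventions.
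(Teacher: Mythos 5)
Your proof is correct. Note that the paper itself contains no argument for this statement: it is quoted verbatim from \cite[Proposition~1]{Fou16}, so there is no in-paper proof to match against; what you have done is supply the self-contained argument that the citation defers, and it is the natural one. Your translation is accurate on all the points where it could go wrong: the identification $A_w = -\{\text{inversions of } w\}$ under $-\alpha_{i,j-1}\leftrightarrow (i,j)$; the observation that condition (1) of Definition~\ref{def:admissible} is automatic for $A_w$ in type {\tt A} (closure under sums is exactly transitivity of the inversion relation), so only the join condition (2) discriminates; the reading of Example~\ref{exam-join}(1), where the join is non-empty precisely for $a<c\leq b<d$ and demands $(a,d)$ and $(c,b)$, with the degenerate case $c=b$ producing the pair $(\alpha_{a,d-1},0)$ and hence reducing to transitivity, as you say. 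Both directions check out: the $2413$ pattern at $(p_1,p_2,p_3,p_4)$ kills $(p_1,p_4)$ and the $4231$ pattern kills $(q_2,q_3)$, while conversely the failure of either join output, combined with the two given inversion inequalities, does force a total order on the four values ($w(b)<w(a)<w(d)<w(c)$, resp.\ $w(d)<w(c)<w(b)<w(a)$), which read along positions $a<c<b<d$ is exactly $2413$, resp.\ $4231$ --- I verified this case analysis and it is watertight, not merely sketched. One cosmetic remark: in the ``if'' direction you state the hypothesis as $a<c<b<d$ after having disposed of $c=b$ separately; it is worth saying explicitly that $a=c$ and $b<c$ give empty joins so no further constraints arise, which closes the quantification over all comparable pairs in condition (2).
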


\subsection{Type \texorpdfstring{\tt C}{C}}
We turn to the symplectic case and it's Weyl group $W_{{\tt C}_n}$. We consider the symmetric group $S_{2n}$ with the involutive element 
\[
w_0: \{1, \ldots, 2n\} \longrightarrow \{ 1, \ldots, 2n \} \; , \; i \mapsto 2n+1-i.
\]
With the help of $w_0$ we define an involution on $S_{2n}$ by
\[
\sigma: S_{2n} \longrightarrow S_{2n}\; , \; \tau \mapsto w_0 \circ \tau \circ w_0.
\]
The symplectic Weyl group $W_{{\tt C}_n}$ is generated by $s_1, \ldots, s_n$ subject to the relations
\[
s_i^2 =1, (s_is_j)^2 = 1 \text{ for } |i-j| > 1, (s_is_{i+1})^3 = 1 \text{ for } i < n-1, (s_{n-1} s_n)^4 = 1.
\]
This symplectic Weyl group is isomorphic to the subgroup of $\sigma$-invariant elements in $S_{2n}$, that is $\tau =  w_0 \circ \tau \circ w_0$, via the following identification $\iota$
\[s_i \mapsto t_it_{2n-i} \text{ for } i \leq n-1; s_n \mapsto t_n.\]
Again we are interested in the $w \in W_{{\tt C}_n}$ such that $A_w$ is admissible. Our goal is
\begin{lemma}\label{lem-adm-adm}
There is a bijection from the set $\{ w \in W_{{\tt C}_n} \mid A_w \text{ is admissible } \}$ to the set $\{ w \in W_{{\tt A}_n} \mid A_w \text{ is admissible }\}$.
\end{lemma}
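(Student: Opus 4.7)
The plan is to reduce the lemma, via the folding dictionary of Section~\ref{sec:admissible} and the type {\tt A} classification already in hand, to a purely combinatorial statement about $\sigma$-invariant pattern-avoiding permutations in $S_{2n}$, and then exhibit (or at least enumerate) a bijection with triangular permutations in $S_{n+1}$.

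First, I would make precise how $\iota : W_{{\tt C}_n}\hookrightarrow S_{2n}^{\sigma}$ interacts with inversion sets. Using the fact that $\iota(s_i)=t_it_{2n-i}$ for $i<n$ and $\iota(s_n)=t_n$, a short induction on the length of a reduced expression shows that, under the Dynkin folding identification $R^-_{{\tt A}_{2n-1}}/\tau \;=\;R^-_{{\tt C}_n}$, one has
\[
A_{\iota(w)} \;=\; \bigsqcup_{\alpha^{\tau}\in A_w}\alpha^{\tau} \;\subset\; R^-_{{\tt A}_{2n-1}},
\]
so in particular $A_{\iota(w)}$ is $\tau$-invariant and its orbit set is exactly $A_w$. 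Combining this with Proposition~\ref{prop-adm-ind} (admissibility is equivalent to induced admissibility) and Proposition~\ref{pro:A-triangular} (in type {\tt A}, admissibility of $A_v$ is equivalent to $v$ avoiding $4231$ and $2413$), I obtain the chain of equivalences
\[
A_w \text{ admissible} \iff A_{\iota(w)} \text{ admissible} \iff \iota(w)\in S_{2n}^{\sigma} \text{ is triangular}.
\]
At this point the lemma is equivalent to the statement that $\sigma$-invariant triangular permutations of $\{1,\dots,2n\}$ are equinumerous with triangular permutations of $\{1,\dots,n+1\}$.

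Second, I would tackle this combinatorial identity directly. The natural device is the signed-permutation encoding $\tau\mapsto(\pi,\epsilon)\in S_n\times\{\pm1\}^n$ of a $\sigma$-invariant element, recovered from $\tau(1),\dots,\tau(n)$. The $\sigma$-symmetry $\tau(2n+1-i)=2n+1-\tau(i)$ means that a pattern occurrence $j_1<j_2<j_3<j_4$ in $\tau$ forces a mirror occurrence at $2n+1-j_4<\cdots<2n+1-j_1$; since $4231$ is self-mirror while $2413$ has mirror $3142$, every forbidden occurrence either lies entirely in $\{1,\dots,n\}$, entirely in $\{n+1,\dots,2n\}$, or straddles the middle. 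I would push this analysis through to reformulate triangularity of $\tau$ as a pattern-avoidance condition on the pair $(\pi,\epsilon)$, and then describe an explicit encoding of such pairs as triangular permutations of $S_{n+1}$ (for instance by a canonical insertion that uses $\epsilon$ to decide whether the image of $i$ sits to the left or right of $n+1$).

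The hardest step is that last bijection: the straddling occurrences of $4231$ and $2413$ create a subtle interaction between the two forbidden patterns under $\sigma$, and it is exactly this interaction which is responsible for the surprising rank-only dependence highlighted in the abstract. I would first verify the cardinality match for small $n$ (the cases $n=1,2$ give $2$ and $6$ respectively, matching $|S_2|$ and $|S_3|$; for $n=3$ one should recover $22=|S_4|-2$), then argue by induction on $n$ by removing the element in position $2n$ together with its $\sigma$-image in position $1$ from $\tau$, and showing that the resulting recursion on $\sigma$-invariant triangular permutations coincides with the standard recursion for permutations of $S_{n+1}$ avoiding $4231$ and $2413$.
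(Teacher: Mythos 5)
Your first step is sound and coincides with the paper's reduction: using $\iota$, the compatibility of inversion sets with the folding, Proposition~\ref{prop-adm-ind}, and Proposition~\ref{pro:A-triangular}, the lemma does become the purely combinatorial claim that $\sigma$-invariant elements of $S_{2n}$ avoiding $4231$ and $2413$ are in bijection with elements of $S_{n+1}$ avoiding the same patterns. But from that point on your proposal is a plan rather than a proof: the bijection itself, which you correctly identify as the hardest step, is never constructed. You offer two routes and execute neither. For the first route, your symmetry dictionary is wrong: $\sigma$-invariance means $\tau = w_0\circ\tau\circ w_0$, i.e.\ $\tau(2n+1-i)=2n+1-\tau(i)$, so an occurrence of a pattern $p$ at positions $j_1<\cdots<j_4$ forces an occurrence of the \emph{reverse-complement} of $p$ at the mirrored positions. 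Both $4231$ and $2413$ are invariant under reverse-complement (the swap $2413\leftrightarrow 3142$ comes from reversal alone, or complement alone, not from conjugation by $w_0$), so the case analysis you intend to build on ``$2413$ has mirror $3142$'' starts from a false premise. For the second route, the deletion recursion (remove position $2n$ together with its $\sigma$-image in position $1$) is asserted to ``coincide with the standard recursion'' for $\mathrm{Av}(4231,2413)$ in $S_{n+1}$, but no such standard peel-off recursion is exhibited or cited, it is not shown that deletion preserves or reflects triangularity, and the fiber counts on both sides are not compared. Checking $n=1,2,3$ numerically is fine as a sanity check but carries no proof weight.

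For contrast, the paper closes exactly this gap with an explicit map rather than a recursion: given $\tau\in S_{2n}$, replace every entry $>n+1$ by $n+1$ and delete all resulting copies of $n+1$ except the leftmost, obtaining $\underline{\tau}\in S_{n+1}$ (Proposition~\ref{pro:patternavoiding}); surjectivity onto pattern-avoiders uses the structural hypothesis on the patterns (both $4231$ and $2413$ have their two largest values in positions $1,3$ up to the stated inequalities), so that a forbidden pattern in $\underline{\tau}$ involving the collapsed entry $n+1$ can be re-inflated inside $\tau$ using the leftmost entry $\geq n+1$ (Proposition~\ref{prop-surj}); injectivity is proved by explicitly reconstructing the unique $\sigma$-invariant preimage of a given triangular $w\in S_{n+1}$, with the $\sigma$-symmetry dictating where the values $n+1,\ldots,2n$ must be placed. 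If you want to salvage your write-up, the most economical fix is to replace your second step entirely with such a collapse-and-reconstruct argument; your signed-permutation encoding can be discarded, since the reverse-complement invariance of both patterns makes the ``straddling'' case analysis both harder and unnecessary.
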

Before giving the proof, we use the lemma to define
\begin{definition}\label{def:symplectictriangular}
An element $w \in W_{{\tt C}_n}$ is called \textbf{symplectic triangular} if $\iota(w)$ is triangular for type {\tt  A}. 
\end{definition}
Due to the facts on triangular elements, we deduce
\begin{corollary}
The number of symplectic triangular elements is given by the sequence A032351 in OEIS, \cite{OEIS2}. 
\end{corollary}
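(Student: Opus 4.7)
The plan is to transfer the type {\tt A} characterisation from Proposition~\ref{pro:A-triangular} through the folding \(\tau\colon {\tt A}_{2n-1} \to {\tt C}_n\), using the compatibility of admissibility with folding given by Proposition~\ref{prop-adm-ind}. The strategy splits into three stages: relate the inversion sets across the folding, translate admissibility into pattern avoidance in \(S_{2n}\), and then build an explicit bijection from the \(\sigma\)-invariant triangular elements of \(S_{2n}\) to the triangular elements of \(S_{n+1}\).

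First I would check that for \(w \in W_{{\tt C}_n}\), the inversion set \(A_{\iota(w)} \subset R^-_{{\tt A}_{2n-1}}\) of the \(\sigma\)-invariant permutation \(\iota(w) \in S_{2n}\) is \(\tau\)-stable, and that its image under the orbit map \(R^-_{{\tt A}_{2n-1}} \to R^-_{{\tt C}_n}\) is precisely \(A_w\). This uses only that \(\iota\) is compatible with the root system folding: the \(\sigma\)-invariance \(\iota(w) = w_0 \circ \iota(w) \circ w_0\) translates directly into \(\tau\)-stability of the set of roots that \(\iota(w)\) sends from \(R^+_{{\tt A}_{2n-1}}\) into \(R^-_{{\tt A}_{2n-1}}\). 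Combining this with Proposition~\ref{prop-adm-ind}, admissibility of \(A_w \subset R^-_{{\tt C}_n}\) is equivalent to admissibility of the \(\tau\)-invariant set \(A_{\iota(w)} \subset R^-_{{\tt A}_{2n-1}}\). By Proposition~\ref{pro:A-triangular}, the latter is in turn equivalent to \(\iota(w)\) being triangular as an element of \(S_{2n}\). This already provides a canonical bijection between \(\{w \in W_{{\tt C}_n} : A_w \text{ admissible}\}\) and the set of \(\sigma\)-invariant triangular elements of \(S_{2n}\).

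The remaining step, which I expect to be the main obstacle, is a bijection between the \(\sigma\)-invariant triangular elements of \(S_{2n}\) and the triangular elements of \(S_{n+1}\). A \(\sigma\)-invariant \(\widetilde w \in S_{2n}\) is determined by its restriction to \(\{1, \ldots, n\}\) and thus encodes a signed permutation of \(n\) letters. I would attempt to construct an explicit map to \(S_{n+1}\) that absorbs the sign data into a single additional position, for instance by inserting the letter \(n+1\) at a slot chosen from the sign pattern. The delicate point is that pattern avoidance of \(4231\) and \(2413\) for \(\widetilde w\) in \(S_{2n}\) must correspond to the same pattern avoidance for its image in \(S_{n+1}\); one has to analyse how four-element subsequences of \(\widetilde w\) realising a forbidden pattern interact with the involution \(\sigma\), and show that the proposed map places them in bijection with four-element forbidden subsequences of the image. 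That the two counts agree with OEIS sequence \texttt{A032351} is a consistency check, but not a proof; the explicit bijection, together with a careful verification that forbidden patterns are preserved in both directions, is the content I would have to produce.
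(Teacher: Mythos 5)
Your plan follows the same route as the paper: everything reduces to a bijection between the $\sigma$-invariant elements of $S_{2n}$ avoiding the triangular patterns and the pattern-avoiding elements of $S_{n+1}$, whose count is A032351. But note first that your opening two stages are superfluous for this particular corollary: by Definition~\ref{def:symplectictriangular}, ``symplectic triangular'' \emph{means} that $\iota(w)\in S_{2n}$ is triangular, so no admissibility transfer through the folding is required here --- the equivalence of admissibility of $A_w$ with triangularity of $\iota(w)$ is the content of Corollary~\ref{pro:c-triang}, not of the count. The corollary is a purely permutation-combinatorial statement, and the paper derives it from Lemma~\ref{lem-adm-adm}, whose proof is exactly the bijection you defer.

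That deferred step is a genuine gap, because it is the entire proof and it is where the real work sits. The paper's map sends $\tau\in S_{2n}$ to $\underline\tau\in S_{n+1}$ by replacing every letter exceeding $n+1$ by $n+1$ and deleting all copies except the leftmost (Proposition~\ref{pro:patternavoiding}); avoidance passes to $\underline\tau$ for free. The direction you correctly flag as delicate --- that a $\sigma$-invariant $\tau$ avoiding the patterns cannot collapse onto a $\underline\tau$ containing one --- is Proposition~\ref{prop-surj}, and it works only because of a structural feature of the relevant patterns that your sketch never identifies: for patterns $\mathbf{i}^*$ whose two largest entries occupy the first and third positions ($i_1,i_3>i_2,i_4$, true for $4231$ and for $3142$, the inverse of $2413$ --- there is some bookkeeping here between a pattern and its inverse that must also be tracked), an occurrence of $\mathbf{i}^*$ in $\underline\tau$ using the collapsed letter $n+1$ can be re-realized inside $\tau$ by substituting the leftmost letter $\geq n+1$, since that letter dominates everything it needs to dominate. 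Without this mechanism, your proposed ``careful verification that forbidden patterns are preserved in both directions'' has nothing to run on. Likewise your inverse map (``insert $n+1$ at a slot chosen from the sign pattern'') is underdetermined as stated; the paper's injectivity argument makes it explicit, reconstructing the unique $\sigma$-invariant preimage by keeping the letters $j\leq n$ at their positions, placing $2n+1-j$ at the mirrored positions $2n+1-i$, and filling the positions at and after that of $n+1$ with the remaining large letters in the order forced by the involution. So: right strategy, correctly located obstacle, but the decisive combinatorial lemma is asserted as future work rather than proved.
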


One could ask whether there is also bijection of the admissible sets of $R_{{\tt C}_n}$ to the admissible sets of $R_{{\tt A}_n}$. In turns out, that there are $8$ admissible subsets in type ${\tt C}_2$ and only $7$ admissible subsets in type ${\tt A}_2$; in type ${\tt A}_2$ all Weyl group elements are triangular while in type ${\tt C}_2$, the list of symplectic triangular Weyl group elements is
\[
\text{id}, s_1, s_2, s_1s_2, s_2s_1s_2, s_1s_2s_1s_2.
\]
In type ${\tt A}_2$, there is one admissible set which is not obtained from a Weyl group element, namely $\{\alpha_1 + \alpha_2\}$, while in type ${\tt C}_2$, there are two $\{\alpha_{1,\bar{1}}\}$ and $\{\alpha_2, \alpha_{1, \bar{1}}\}$.

We turn to the proof of Lemma~\ref{lem-adm-adm}, which has three steps:
 \begin{proposition}\label{pro:patternavoiding}
Fix a pattern $\mathbf{i} \subset \{ 1, \ldots, 4\}$. There is a mapping between the elements from $S_{2n}$ that avoid $\mathbf{i}$, and the elements from $S_{n+1}$ that avoid $\mathbf{i}$.
 \end{proposition}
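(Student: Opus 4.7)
The plan is to construct an explicit map of a folding type and show pattern avoidance is preserved. The most natural candidate is the ``flattening'' map $\phi : S_{2n} \to S_{n+1}$ that sends a permutation $\tau$ to the unique element of $S_{n+1}$ whose relative order on $\{1,\ldots,n+1\}$ agrees with the sequence $(\tau(1),\ldots,\tau(n+1))$; equivalently, $\phi(\tau)(i) < \phi(\tau)(j)$ iff $\tau(i) < \tau(j)$ for $1\leq i < j\leq n+1$. Since the image sets in Lemma~\ref{lem-adm-adm} are sets of $\sigma$-invariant elements, one should restrict $\phi$ to $\iota(W_{{\tt C}_n}) \subset S_{2n}$ so that $\tau$ is entirely determined by its values on $\{1,\ldots,n\}$ together with the symmetry $\tau(2n+1-i)=2n+1-\tau(i)$.

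The first key step is pattern preservation in the forward direction: if $\phi(\tau)$ contains the pattern $\mathbf{i}$ at positions $j_1<j_2<j_3<j_4 \leq n+1$, then by definition the values $\tau(j_1),\tau(j_2),\tau(j_3),\tau(j_4)$ are in the same relative order, so $\tau$ itself contains $\mathbf{i}$ at the same positions. Contrapositively, if $\tau$ avoids $\mathbf{i}$, then so does $\phi(\tau)$. This is essentially the standard fact that ``flattening preserves patterns'' and is immediate from the definitions.

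For the reverse direction one constructs an inverse by unfolding: given $w \in S_{n+1}$ avoiding $\mathbf{i}$, one produces a $\sigma$-invariant permutation $\tilde w \in S_{2n}$ whose first $n+1$ positions have the prescribed relative order and whose remaining values are forced by the $\sigma$-symmetry. The main obstacle will be verifying that this lift remains pattern-avoiding: any occurrence of $\mathbf{i}$ in $\tilde w$ must be analysed according to how many of its four positions lie in $\{1,\ldots,n\}$ versus $\{n+1,\ldots,2n\}$, and in each case $\sigma$-invariance will let us transport the occurrence to one lying in positions $\leq n+1$, which would then appear in $w=\phi(\tilde w)$. Carrying this case analysis out will rely on the specific shape of the relevant patterns ($4231$ and $2413$ in the application to Lemma~\ref{lem-adm-adm}) and on the self-complementary nature of the values forced by $\sigma$. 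Once both directions are established, $\phi$ and its inverse form the claimed mapping, setting the stage for the next two steps of the proof of Lemma~\ref{lem-adm-adm}.
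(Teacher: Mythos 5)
Your forward construction is correct and does prove the literal statement, but your map is genuinely different from the paper's, and the difference matters. The paper does not flatten the first $n+1$ positions; it projects by \emph{values}: every entry of $\tau$ greater than $n+1$ is replaced by $n+1$, and then all copies of $n+1$ except the leftmost are deleted, yielding $\underline{\tau} \in S_{n+1}$. Your $\phi$ instead restricts to positions $1,\ldots,n+1$ and flattens. Both maps send avoiders to avoiders by the same one-line argument (an occurrence of $\mathbf{i}$ downstairs lifts to one upstairs: in the paper's map because the surviving $n+1$ stands for a value that dominates all retained entries, in yours because flattening preserves relative order), so for Proposition~\ref{pro:patternavoiding} as stated either map suffices, and your first two paragraphs are at the same level of rigour as the paper's proof. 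The maps really are different: for $n=2$ and the $\sigma$-invariant $\tau = 3412$ one gets $\underline{\tau} = 312$ but $\phi(\tau) = 231$.

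The gap is in your sketched continuation, which is the whole point of choosing this particular map. The paper's value-based projection retains every entry $\leq n$, including those at positions beyond $n+1$, and this is precisely what the surjectivity step (Proposition~\ref{prop-surj}) exploits. Your position-based $\phi$ discards that information and is provably \emph{not} surjective from $\sigma$-invariant avoiders onto avoiders in $S_{n+1}$ for the relevant patterns. Take $n=2$, $\mathbf{i} = 4231$, and $w = 312 \in S_3$, which avoids $4231$ vacuously. A $\sigma$-invariant $\tau \in S_4$ is determined by $(\tau(1),\tau(2))$ via $\tau(3) = 5-\tau(2)$ and $\tau(4) = 5-\tau(1)$, and demanding that $(\tau(1),\tau(2),\tau(3))$ flatten to $312$, i.e.\ $\tau(2) < \tau(3) < \tau(1)$, forces $\tau(2)=2$, $\tau(3)=3$, $\tau(1)=4$, so $\tau = 4231$: the unique $\sigma$-invariant preimage of $312$ under $\phi$ is the forbidden pattern itself. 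By contrast, under the paper's map $312$ has the $4231$-avoiding $\sigma$-invariant preimage $3412$. Hence no ``unfolding'' inverse of $\phi$ can exist, and the case analysis you defer to later would fail rather than merely be tedious. If this proposition is to feed into Lemma~\ref{lem-adm-adm}, you must switch to the paper's value-capping projection (or prove surjectivity for some other map from scratch). A minor further point: the restriction to $\iota(W_{{\tt C}_n})$ plays no role in this proposition, which concerns all of $S_{2n}$; $\sigma$-invariance only enters in the subsequent steps.
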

\begin{proof}
Let $\tau =  (j_1, \ldots, j_{2n}) \in S_{2n}$ and define $\tilde{\tau}$ to be the sequence where all $j > n+1$ are replaced by $n+1$. Let $\underline{\tau}$ denote the sequence where all $n+1$ in $\tilde{\tau}$ are deleted except the leftmost. Then $\underline{\tau} \in S_{n+1}$. It follows immediately, that if $\tau$ avoids the pattern $\mathbf{i}$, then $\underline{\tau}$ avoids $\mathbf{i}$ as well.
\end{proof}

In the next step, we prove the surjectivity:
\begin{proposition}\label{prop-surj}
Consider a pattern $\mathbf{i}^* := i_1 i_2 i_3 i_4$ with $i_j \in \{1, \ldots, 4\}$ pairwise distinct such that $i_1>i_2,i_4$ and $i_3 > i_2,i_4$. Then there is a surjection between the elements in $S_{2n}$ that are $\sigma$-invariant and avoid the pattern $\mathbf{i}^*$ and the elements in $S_{n+1}$ that avoid the pattern $\mathbf{i}^*$.
\end{proposition}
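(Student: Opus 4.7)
The plan is to exhibit an explicit section $\Phi\colon S_{n+1}\to S_{2n}$ of the map $\underline{\,\cdot\,}$ of Proposition~\ref{pro:patternavoiding} whose image lies in the $\sigma$-invariant permutations and which sends $\mathbf{i}^*$-avoiding elements to $\mathbf{i}^*$-avoiding elements. Given $\pi\in S_{n+1}$ with $\pi(k)=n+1$ and setting $p_i:=\pi(i)$, the idea is to place the entries $p_i$ with $i\neq k$ (which are all $\leq n$) at the two ``small blocks'' $\{1,\ldots,k-1\}$ and $\{n+1,\ldots,2n+1-k\}$, and to fill the two ``big blocks'' $\{k,\ldots,n\}$ and $\{2n+2-k,\ldots,2n\}$ with the $\sigma$-complements demanded by $\sigma$-invariance. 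Explicitly,
\[
\Phi(\pi)(i)=\begin{cases} p_i & \text{if } 1\leq i\leq k-1,\\ 2n+1-p_{n+k+1-i} & \text{if } k\leq i\leq n,\\ p_{i+k-n} & \text{if } n+1\leq i\leq 2n+1-k,\\ 2n+1-p_{2n+1-i} & \text{if } 2n+2-k\leq i\leq 2n.\end{cases}
\]
A direct verification shows $\Phi(\pi)(i)+\Phi(\pi)(2n+1-i)=2n+1$, so $\Phi(\pi)$ is $\sigma$-invariant, and that collapsing values $>n+1$ to $n+1$ followed by deletion of all but the leftmost $n+1$ recovers $\pi$. Hence $\Phi$ is a section of $\underline{\,\cdot\,}$ landing in the $\sigma$-invariant elements.

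It remains to prove that $\Phi(\pi)$ avoids $\mathbf{i}^*$ whenever $\pi$ does. Arguing contrapositively, suppose $\tau:=\Phi(\pi)$ realises $\mathbf{i}^*$ at positions $q_1<q_2<q_3<q_4$. Call $q_a$ \emph{small} if $\tau(q_a)\leq n$ (equivalently, $q_a\in\{1,\ldots,k-1\}\cup\{n+1,\ldots,2n+1-k\}$) and \emph{big} otherwise. The hypothesis $i_1,i_3>i_2,i_4$ translates to $\tau(q_1),\tau(q_3)>\tau(q_2),\tau(q_4)$ in the pattern; since every big value exceeds every small value, no big position can occur at $q_2$ or $q_4$ unless $q_1$ and $q_3$ are big as well. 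This restricts the small/big profile of $(q_1,q_2,q_3,q_4)$ to a short list of possibilities.

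The argument is then a case-by-case analysis. In the all-small case the four $q_a$'s project order-preservingly to four $\pi$-positions carrying the same values, witnessing $\mathbf{i}^*$ in $\pi$. In the all-big case the values take the form $2n+1-p_{r_a}$ with $r_1>r_2>r_3>r_4$, so on the $\pi$-positions $r_4<r_3<r_2<r_1$ the relative ranks reverse and complement, producing the centrally reflected pattern of $\mathbf{i}^*$ (which for self-reflective $\mathbf{i}^*$ such as $4231$ is $\mathbf{i}^*$ itself). In each remaining feasible mixed case, the three small $q_a$'s project to three $\pi$-positions, all lying strictly to the right of $k$, that exhibit a $3$-sub-pattern of $\mathbf{i}^*$; adjoining the $\pi$-position $k$ on the left, whose value $n+1$ is globally maximal, upgrades this to a full $\mathbf{i}^*$-occurrence in $\pi$.

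The main obstacle is the mixed-case analysis, which requires careful tracking of how the big positions of $\tau$ split between the two blocks $\{k,\ldots,n\}$ and $\{2n+2-k,\ldots,2n\}$, with the small middle block $\{n+1,\ldots,2n+1-k\}$ sandwiched in between. Certain profiles --- notably the zig-zag $(b,s,b,s)$ --- are ruled out by geometric incompatibility (a small position cannot lie to the right of any position $\geq 2n+2-k$), and the remaining profiles succumb to the observation that $\pi(k)=n+1$ is the global maximum of $\pi$ and so always supplies the top-ranked entry of $\mathbf{i}^*$ needed to complete the pattern in $\pi$; here the hypothesis $i_1,i_3>i_2,i_4$ is again essential, as it guarantees that the max-entry $n+1$ of $\pi$ can genuinely play the role demanded by $\mathbf{i}^*$.
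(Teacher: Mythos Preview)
Your case analysis is incomplete, and in fact the proposition as stated is false in the generality claimed. The hypothesis $i_1,i_3>i_2,i_4$ admits four patterns: $4231$, $4132$, $3241$, $3142$. Only $4231$ and $3142$ are invariant under reverse--complement, so your all-big argument already breaks for the other two. More seriously, you only treat profiles with zero, one, or four big positions; the three-big profiles $(b,b,b,s)$ and $(b,s,b,b)$ are also feasible (e.g.\ $(b,s,b,b)$ with $q_1\in\{k,\ldots,n\}$, $q_2\in\{n+1,\ldots,2n+1-k\}$, $q_3,q_4\in\{2n+2-k,\ldots,2n\}$) and are not covered. Concretely, take $n=3$, $\mathbf{i}^*=4132$ and $\pi=3241\in S_4$, which avoids $4132$. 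Then $\Phi(\pi)=326154$ realises $4132$ at positions $(3,4,5,6)$ with profile $(b,s,b,b)$. Worse, $326154$ is the \emph{unique} $\sigma$-invariant element of $S_6$ with $\underline{\tau}=3241$, so no $\sigma$-invariant $4132$-avoider maps to $3241$ under $\underline{\,\cdot\,}$ at all: the surjection genuinely fails for $\mathbf{i}^*=4132$.

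Your one-big description is also inaccurate for the profile $(s,s,b,s)$ (the relevant one-big profile when $i_3=4$): there $q_1,q_2\in\{1,\ldots,k-1\}$ project to $\pi$-positions \emph{left} of $k$, so $k$ must be inserted at the third slot, not ``on the left''. For the two self-reverse-complement patterns $4231$ and $3142$---the ones the paper actually needs---your approach can be salvaged: the missing three-big profiles reduce, via the $\sigma$-reflection $q\mapsto 2n+1-q$ (which swaps big and small positions and sends an $\mathbf{i}^*$-occurrence to a reverse-complement occurrence), to one-big profiles, which one then handles as above. By contrast, the paper's own proof of this proposition argues in the opposite direction, showing only that $\underline{\tau}$ avoids $\mathbf{i}^*$ whenever the $\sigma$-invariant $\tau$ does; the explicit preimage---identical to your $\Phi$---appears only in the subsequent proof block, and there the paper does not verify pattern-avoidance either.
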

\begin{proof}
Suppose $\tau\in S_{2n}$ is invariant under $\sigma$ and avoids $\mathbf{i}^*$. Let $\underline{\tau}$ be the pattern constructed from $\tau$ as in the proof of Proposition \ref{pro:patternavoiding}. Let $1\leq j_1 < j_2 < j_3 < j_4 \leq n+1$ be such that $\underline{\tau}(j_1, j_2, j_3, j_4)$ is the pattern $\mathbf{i}^*$. Then either $j_4 < n+1$, in which case this $\tau$ would have the same pattern, which is a contradiction or $j_4 = n+1$ and we set $n+1 \leq s \leq 2n$ the leftmost element in $\tau$ greater or equals $n+1$. Then  $\tau(j_1, j_2, j_3, s)$ is the pattern $\mathbf{i}^*$. We conclude that $\underline{\tau}$ avoids the pattern $\mathbf{i}^*$.
\end{proof}
For the bijection of Lemma~\ref{lem-adm-adm}, it remains to prove the injectivity:

\begin{proof}
Let $w\in S_{n+1}$ be an element that avoids the patterns $3142$ and $4231$. We will construct the preimage of this element in the following way. Consider the position of $n+1$ in the presentation of $w$. Let $k$ denote the number of elements left of $n+1$ in this presentation. Let $j$ be the element at position $i < n+1$. Construct an element $w'$ as follows. We leave every element $j$ at position $i$ and every element of the form $2n+1-j$ (left of $n+1$), we put at position $2n+1-i$. Now we are left with the element $n+1$ and the elements to its right. These are mapped in the following way. The ordering of the elements $1,\ldots,n$ determines the ordering of $n+1,\ldots, 2n$, because of the involution. Now replace all $n+1$ by the elements $n+1, \ldots, 2n$ which are not placed before, in that order. These are $n-k$ elements. So the elements to the right of $n+1$ are just shifted by $n-k$ to the right. 
\end{proof}

From Proposition~\ref{prop-adm-ind} and Lemma~\ref{lem-adm-adm}, we can deduce the following analogue to Proposition \ref{pro:A-triangular}:

\begin{corollary}\label{pro:c-triang}
Let $w \in W_{{\tt C}_n}$, then $w^{-1}(R^+) \cap R^-$ is admissible if and only if $w$ is symplectic triangular.
\end{corollary}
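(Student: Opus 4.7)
The plan is to deduce Corollary \ref{pro:c-triang} directly from the type {\tt A} classification in Proposition \ref{pro:A-triangular} together with the orbit-admissibility dictionary of Proposition \ref{prop-adm-ind}, with Lemma \ref{lem-adm-adm} providing the counting/bijection framework in the background.

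First I would unravel the identifications: for $w \in W_{{\tt C}_n}$, the image $\iota(w) \in S_{2n} = W_{{\tt A}_{2n-1}}$ is $\sigma$-invariant, and the Dynkin involution $\tau$ acts on $R^-_{{\tt A}_{2n-1}}$ compatibly so that the fixed-point algebra is of type ${\tt C}_n$ (case (a) in Section \ref{sec:preliminaries}). The key root-theoretic observation is
\[
A_w \;=\; \bigl(A_{\iota(w)}\bigr)^{\tau},
\]
where the left-hand side is the inversion set of $w$ inside $R^-_{{\tt C}_n}$ and the right-hand side is the set of $\tau$-orbits of the inversion set $A_{\iota(w)} = \iota(w)^{-1}(R^+_{{\tt A}_{2n-1}}) \cap R^-_{{\tt A}_{2n-1}}$. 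The $\sigma$-invariance of $\iota(w)$ is precisely what guarantees that $A_{\iota(w)}$ is a $\tau$-invariant subset of $R^-_{{\tt A}_{2n-1}}$, so that the orbit notation is meaningful.

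Next I would invoke Proposition \ref{prop-adm-ind} with $X = {\tt A}_{2n-1}$ and $X^{\tau} = {\tt C}_n$: since $A_w = (A_{\iota(w)})^{\tau}$, the subset $A_w$ is FFLV-admissible in $R^-_{{\tt C}_n}$ if and only if $A_{\iota(w)}$ is FFLV-admissible in $R^-_{{\tt A}_{2n-1}}$. Proposition \ref{pro:A-triangular} applied to the permutation $\iota(w) \in S_{2n}$ then says that the latter holds if and only if $\iota(w)$ avoids the patterns $4231$ and $2413$, i.e.\ is triangular in the sense of type {\tt A}. By Definition \ref{def:symplectictriangular} this is exactly the condition that $w$ be symplectic triangular, yielding both implications.

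The only step that requires any work is verifying the orbit identity $A_w = (A_{\iota(w)})^{\tau}$. This is a direct root-by-root check using the explicit description of $R^+_{{\tt C}_n}$ as $\tau$-orbits of $R^+_{{\tt A}_{2n-1}}$ together with the formula $\iota(s_i) = t_i t_{2n-i}$ for $i \le n-1$ and $\iota(s_n) = t_n$, and is the main (but entirely routine) technical ingredient. Once this identity is established, the corollary follows formally by chaining Propositions \ref{prop-adm-ind} and \ref{pro:A-triangular} with Definition \ref{def:symplectictriangular}.
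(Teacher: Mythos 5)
Your proposal is correct and takes essentially the paper's own route: the paper deduces the corollary in one line from Proposition~\ref{prop-adm-ind} together with Lemma~\ref{lem-adm-adm} (whose substance is the type {\tt A} pattern-avoidance classification of Proposition~\ref{pro:A-triangular}), which is exactly your chain through the folding identity $A_w = (A_{\iota(w)})^{\tau}$ and Definition~\ref{def:symplectictriangular}. If anything, your write-up is slightly more direct than the paper's citation trail---you apply Proposition~\ref{pro:A-triangular} to $\iota(w) \in S_{2n}$ (i.e.\ in type ${\tt A}_{2n-1}$), bypassing the bijection with $S_{n+1}$ of Lemma~\ref{lem-adm-adm}, and you make explicit the $\tau$-orbit identity for inversion sets that the paper leaves implicit in its remark that every subset of $R^-_{X^{\tau}}$ is of the form $A^{\tau}$.
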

\section{Application}\label{sec:application}
We discuss in this section the application we have in mind for defining admissible sets. For this we will recall Demazure modules, the PBW filtration and the FFLV monomial bases for simple, finite-dimensional modules.
\subsection{Submodules for admissible sets}
We fix $\lie g$ and an admissible set $A \subset R^-$, then $\lie h \oplus \lie n_A^-$ is a subalgebra and hence we can consider $U(\lie h \oplus \lie n^-_A)$ as a subalgebra in $U(\lie h \oplus \lie n^-)$. We further fix a dominant, integral weight $\lambda$ and a highest weight vector $v_\lambda \in V(\lambda)$. We define
\[
V_A(\lambda) := U(\lie n_A^-)\cdot v_\lambda.
\]
By construction, this is a $\lie h \oplus \lie n^-_A$-module but in fact, it can be extended to a module for 
\[
\lie h \oplus \lie n_A^+ \oplus \lie n^-_A
\]
where $\lie n_A^+$ is the maximal subalgebra in $\lie n^+$ such that the above is invariant under $\lie n_A^+$.\\
In the special case $A =A_w$, one has
\[
V_A(\lambda) \cong V_w(\lambda)
\]
via the twist $w: U(\lie h \oplus \lie n_A^+ \oplus \lie n^-_A) \longrightarrow U(\lie b)$. We want to analyse the modules $V_A(\lambda)$ and especially asking for a monomial basis, e.g. a set of monomials in $U(\lie n^-_A)$. Note, that in the special case of Demazure modules there are monomial bases known, while we explain in the next section that we are interested in a monomial basis defined by a homogeneous lexicographic order and such bases for Demazure modules are not known in general.


\subsection{PBW filtration and sub objects}
For a Lie algebra $\lie a$, we explain briefly that the PBW filtration on $U(\lie a)$ is compatible with the filtration on the universal enveloping algebra of a subalgebra. In the case of submodules, one has to ask for mild assumptions which are satisfied in all relevant cases.

\begin{proposition}\label{pro:inducedpbw}
Let $\lie b \subset \lie a$ be a subalgebra, then $U(\lie b)_s = U(\lie a)_s \cap U(\lie b)$, i.e., the PBW filtration is compatible with subalgebras.
\end{proposition}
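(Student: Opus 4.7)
The plan is to reduce the statement to a comparison of PBW bases by choosing a compatible ordered basis of $\lie a$. First, fix an ordered basis $(y_1, \ldots, y_k)$ of $\lie b$ and extend it to an ordered basis $(y_1, \ldots, y_k, z_{k+1}, \ldots, z_l)$ of $\lie a$. The PBW theorem, applied to both $\lie b$ and $\lie a$ with these bases, then yields that the ordered monomials $y_1^{a_1} \cdots y_k^{a_k}$ form a vector-space basis of $U(\lie b)$, while the ordered monomials $y_1^{a_1} \cdots y_k^{a_k} z_{k+1}^{b_{k+1}} \cdots z_l^{b_l}$ form a basis of $U(\lie a)$, so the former is literally a subset of the latter.

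The inclusion $U(\lie b)_s \subseteq U(\lie a)_s \cap U(\lie b)$ is immediate from the definition of the filtration, since any product of at most $s$ elements of $\lie b$ is in particular a product of at most $s$ elements of $\lie a$. For the reverse inclusion, take $x \in U(\lie a)_s \cap U(\lie b)$ and expand $x$ uniquely in the PBW basis of $U(\lie a)$. Because $x$ lies in $U(\lie b)$, the uniqueness of the expansion forces every coefficient of a monomial involving some $z_i$ (with $i > k$) to vanish, so $x$ is a linear combination of ordered monomials of the form $y^{\underline{a}} = y_1^{a_1} \cdots y_k^{a_k}$. Each such monomial has minimal filtration degree exactly $|\underline{a}| = a_1 + \cdots + a_k$ in $U(\lie a)$, so the hypothesis $x \in U(\lie a)_s$ forces $|\underline{a}| \leq s$ for every nonzero summand. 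Since these same monomials span $U(\lie b)_s$, we conclude $x \in U(\lie b)_s$.

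The only nontrivial input is the statement that an ordered PBW monomial $y^{\underline{a}}$ has filtration degree exactly $|\underline{a}|$ and no lower, which is precisely the content of the PBW theorem: the associated graded algebra of $U(\lie a)$ is the symmetric algebra $S(\lie a)$, and the classes of the ordered monomials form a homogeneous basis. Once this is granted, the comparison between filtrations on $U(\lie b)$ and $U(\lie a)$ is pure bookkeeping via the nested PBW bases above.
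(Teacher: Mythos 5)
Your proof is correct and takes essentially the same route as the paper's: both fix an ordered basis of $\lie b$, extend it to one of $\lie a$, and use uniqueness of expansion in the resulting nested PBW bases to see that an element of $U(\lie a)_s \cap U(\lie b)$ involves only $\lie b$-monomials of degree at most $s$. The paper merely phrases the final step as a contradiction via the freeness of $U(\lie a)$ as a $U(\lie b)$-module, whereas you argue directly through $\text{gr } U(\lie a) \cong S(\lie a)$; the underlying bookkeeping is identical.
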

\begin{proof}
It is clear, that $U(\lie b)_s \subset U(\lie a)_s \cap U(\lie b)$. We fix an ordered basis $(x_1, \ldots, x_s)$ of $\lie b$ and extend with $(y_1, \ldots, y_t)$ to a basis of $\lie a$.
We denote the induced basis of $U(\lie b)$ by $\{ x_I \mid I \in \Z_{> 0}^{s}\}$. 
$U(\lie a)$ is a free $U(\lie b)$-module with basis given by ordered monomials in $(y_1, \ldots, y_t)$ and hence a basis of $U(\lie a)$ is given by $\{ x_I y_J \mid I \in \Z_{> 0}^s, J \in \Z_{> 0}^t \}$.

Suppose now $p \in U(\lie b)_r \setminus U(\lie b)_{r-1}$, then $p = \sum_{I} c_I x_I$ with $|I| \leq r$ and at least one $I$ with $|I| = r$. Suppose $p \in U(\lie a)_{r-1}$, then $p = \sum_{I,J} c_{I,J} x_I y_J$ with $|I| + |J| \leq r-1$ and by construction there is $c_{I,J} \neq 0$ for some $J$ with $|J| \neq 0$. But the freeness of $U(\lie a)$ and the equality $\sum_{I} c_I x_I = \sum_{I,J} c_{I,J} x_I y_J$ provide a contradiction. This implies that $U(\lie b)_s = U(\lie a)_s \cap U(\lie b)$.
\end{proof}

We turn to PBW filtration of sub-modules, say $\lie b \subset \lie a$ a subalgebra and $V$ a cyclic $\lie a$-module with generator $v$. Here one can apply the PBW filtration to $$V_{\lie b}:= U(\lie b)\cdot v \subset V.$$ 
In contrast to the case of subalgebras, this filtration is in general not compatible with the filtration on $V$. Let $\lie a = \langle x,y,z \rangle_{\C}$ be the abelian three dimensional Lie algebra and $\lie b = \langle x,y \rangle_{\C}$. 
We consider the $U(\lie a)$-module defined by the ideal $(x^2, y^2, z^2, xz, yz, xy -z)$. Then the degree vector of $\text{gr } V$ is $(1, 3, 0, \ldots)$. 
While the submodule through $1$ generated by $\lie b$ has degree vector $(1, 2, 1, 0 \ldots)$. 
We see, that if the vanishing ideal is not homogeneous, the PBW filtration is not compatible. One needs a few more properties to have the PBW filtration on modules compatible with submodules.\\

Let for the moment $\lie g$ be a simple, finite-dimensional complex Lie algebra with triangular decomposition $\lie g = \lie n^+ \oplus \lie h \oplus \lie n^-$ and let $\lie b \subset \lie n^-$ be a subalgebra such that $\lie b = \bigoplus_{\alpha \in R^-} \lie b \cap \lie g_{\alpha}$. Let $\lambda$ be a dominant integral weight for $\lie g$ and $V(\lambda)$ the irreducible highest weight module with highest weight $\lambda$, $v_\lambda$ a highest weight generator.

\begin{proposition}\label{pro:pbwsubmodules}
With this setup, one has $\text{gr } V_{\lie b}(\lambda) \subset \text{gr } V(\lambda)$, the PBW filtration on $V(\lambda)$ is compatible with the filtration on the subalgebra.
 \end{proposition}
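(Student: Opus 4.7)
The plan is to prove the equivalent statement $V_{\lie b}(\lambda)_s = V_{\lie b}(\lambda) \cap V(\lambda)_s$ for every $s \geq 0$, since this is precisely what ensures injectivity of the induced map $\text{gr } V_{\lie b}(\lambda) \to \text{gr } V(\lambda)$. First I would observe that both filtrations are $\lie h$-stable—as $\lie b$ and $\lie n^-$ are root-graded—so the question splits over weight components and may be checked on each $V(\lambda)^\mu$ separately. Next I would fix a PBW-ordered basis of $\lie n^-$ with the basis vectors of $\lie b$ coming first. By Proposition~\ref{pro:inducedpbw} this gives $U(\lie b)_r = U(\lie n^-)_r \cap U(\lie b)$ for every $r$, together with a weight- and degree-preserving vector-space decomposition $U(\lie n^-) = U(\lie b) \oplus \lie I$, where $\lie I$ is spanned by those PBW monomials that involve at least one basis element outside $\lie b$.

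With these preliminaries in place, the inclusion $V_{\lie b}(\lambda)_s \subseteq V_{\lie b}(\lambda) \cap V(\lambda)_s$ is immediate, and I would address the reverse by induction on $s$. Given $v \in V_{\lie b}(\lambda) \cap V(\lambda)_s$, write $v = Yv_\lambda$ for some $Y \in U(\lie b)$ of \emph{minimal} PBW degree $t$; the target is $t \leq s$. If $t > s$, decompose $Y = Y_t + Y_{<t}$ into its pure degree-$t$ PBW component and its strictly lower remainder. Then $Y_tv_\lambda = v - Y_{<t}v_\lambda$ lies in $V_{\lie b}(\lambda) \cap V(\lambda)_{t-1}$, and exhibiting any $Z \in U(\lie b)_{t-1}$ with $Zv_\lambda = Y_tv_\lambda$ produces the representative $(Z + Y_{<t})v_\lambda$ of $v$ of degree $\leq t-1$ in $U(\lie b)$, contradicting the minimality of $t$.

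The hard part will be producing such a $Z$: the inductive hypothesis guarantees compatibility only at level $s-1$, not at the potentially larger level $t-1$, so a direct induction alone does not close the loop. To overcome this I would invoke the specific structure of $V(\lambda)$—namely, that its annihilator $\text{ann}_{U(\lie n^-)}(v_\lambda)$ is the left $U(\lie n^-)$-ideal generated by the \emph{pure} singular vectors $\{f_i^{\langle \lambda,\alpha_i^\vee\rangle+1}\}_{i=1}^n$, each being a power of a single Chevalley generator. Exploiting this purity together with the $\lie h$-weight grading, I would show that any witness $X \in U(\lie n^-)_{t-1}$ with $Xv_\lambda = Y_tv_\lambda$ can be modified, by successively subtracting suitable left multiples of these singular vectors, so as to eliminate its $\lie I$-component without raising the PBW degree; what remains is the required $Z \in U(\lie b)_{t-1}$. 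This ``$\lie b$-purification'' of the witness, an induction on the number of non-$\lie b$ generators appearing in $X$, is the main technical step.
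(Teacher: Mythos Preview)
Your reduction to proving $V_{\lie b}(\lambda)_s = V_{\lie b}(\lambda) \cap V(\lambda)_s$ weight-space by weight-space is correct, as is your use of Proposition~\ref{pro:inducedpbw}. But the heart of your argument---the ``$\lie b$-purification'' step---is a genuine gap, and you seem to sense this yourself. You correctly observe that the induction on $s$ does not close: you need a $U(\lie b)$-witness at filtration level $t-1$, not $s-1$, and $t$ may well exceed $s$. Your proposed fix, modifying a witness $X \in U(\lie n^-)_{t-1}$ by subtracting left multiples of the singular vectors $f_i^{m_i+1}$ to kill its $\lie I$-component, is only asserted. There is no visible mechanism by which such subtractions reduce the number of non-$\lie b$ factors: if $f_i \notin \lie b$ then $g \cdot f_i^{m_i+1}$ is itself full of non-$\lie b$ factors in the PBW basis, while if $f_i \in \lie b$ and $g \in U(\lie b)$ then the product already lies in $U(\lie b)$ and does nothing to the $\lie I$-part you are trying to remove. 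The ``purity'' of the generators does not by itself furnish a purification procedure, and the promised induction on the number of non-$\lie b$ generators has no stated inductive step.

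The paper's proof takes an entirely different and much shorter route. Rather than working with the presentation of the annihilator ideal, it invokes a result from \cite{OH02} describing the relations among root-vector monomials acting on $v_\lambda$: these come in two flavours, homogeneous exchange relations (which preserve PBW degree and may be ignored) and inhomogeneous ones of the shape $\alpha + \beta = \gamma$ for roots $\alpha,\beta,\gamma$ (merging two root vectors into one). The argument is then one line: since $\lie b$ is a Lie subalgebra spanned by root vectors, it is closed under precisely this merging, so any shorter expression for $f_{\beta_1}\cdots f_{\beta_s}\cdot v_\lambda$ with all $\beta_i \in \lie b$ already lives in $U(\lie b)$. This sidesteps the annihilator generators entirely and uses only the subalgebra property of $\lie b$---which is exactly the hypothesis distinguishing this situation from the abelian counterexample given just before the proposition.
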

 \begin{proof}
We follow here the proof in \cite{CFF20}. The PBW filtration on $V(\lambda)$ is induced by the filtration on each weight space $V_\mu(\lambda)$ and for the associated graded module, it is enough to compare the degree of each monomial in root vectors. Now let $0 \neq f_{\beta_1} \cdots f_{\beta_s}\cdot v_\lambda \in V_{\lie a}(\lambda)$ with $f_{\beta_i} \in \lie b$. Suppose there exists $f_{\gamma_1} \cdots f_{\gamma_r}\cdot v_\lambda = f_{\beta_1} \cdots f_{\beta_s}\cdot v_\lambda$ for some $r < s$. From \cite{OH02}, we deduce that there are two kind of exchange relations on roots. The first one is homogeneous and for our purpose, we can ignore it here. The second is not homogeneous and reads as $\alpha + \beta = \gamma$ for some roots. Suppose now there are $i,j, k$ with $\beta_i + \beta_j = \gamma_k$. But this implies that $f_{\gamma_k} \in \lie b$, since $\lie b$ is a subalgebra. Hence we deduce that $f_{\gamma_1} \cdots f_{\gamma_r} \in U(\lie b)$ and hence the PBW filtrations are compatible.
 \end{proof}


\subsection{The FFLV bases}
In \cite{FFL11a} and \cite{FFL11b}, monomial bases of simple, finite-dimensional modules for complex Lie algebras of type {\tt A} and {\tt C} were introduced:\\
Let $\lambda = \sum m_i \omega_i \in P^+$, then for type {\tt A} (resp. type {\tt C}), the following polytope has been defined in \cite{FFL11a} (resp. \cite{FFL11b}):
 \[
 P(\lambda) = \left\{ (x_\alpha) \in \mathbb{R}^{\sharp R^+}_{\geq 0} \mid \forall\; \text{Dyck paths } \mathbf{p}, \sum_{\alpha_ \in \mathbf{p}} x_\alpha \leq m_{e(p)} - m_{s(p)} \right \}
\]
We denote $S(\lambda) = P(\lambda) \cap \mathbb{Z}^{\sharp R^+}$, the lattice points.
In the mentioned paper it is proved that
\[
\left\{ \prod_{\alpha} f_\alpha^{s_\alpha}\cdot v_\lambda \mid \mathbf{s} \in S(\lambda) \right\}
\]
is (for any chosen order in the monomials) a basis of $V(\lambda)$. This basis share an interesting property as we will explain here:
\begin{theorem}[\cite{FFL11a, FFL11b}]\label{thm-fflv} The following is a basis of $\text{gr } V(\lambda)$:
\[
\left\{ \prod_{\alpha} f_\alpha^{s_\alpha}\cdot v_\lambda \mid \mathbf{s} \in S(\lambda) \right\}.
\]
\end{theorem}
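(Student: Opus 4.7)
The plan is to realize $\text{gr } V(\lambda)$ as an explicit quotient of $S(\lie n^-)$ and to show that the proposed monomials descend to a basis of this quotient. Since $V(\lambda)=U(\lie n^-)\cdot v_\lambda$ is cyclic, the associated graded $\text{gr } V(\lambda)$ is a cyclic $S(\lie n^-)$-module, so one may write $\text{gr } V(\lambda) \cong S(\lie n^-)/I(\lambda)$ for a homogeneous ideal $I(\lambda)$. The first step is to pin down $I(\lambda)$. Starting from the classical Garland--Lepowsky relations $f_\beta^{m_\beta+1}\cdot v_\lambda=0$ for all positive $\beta$ (with $m_\beta=\lambda(h_\beta)$), one takes the $\lie n^+$-orbit and passes to the associated graded. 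In types {\tt A} and {\tt C} this produces the family of \emph{Dyck-path relations}: for every Dyck path $\mathbf{p}$ and suitable nonzero scalars $c_\alpha$,
\[
\Bigl(\sum_{\alpha\in\mathbf{p}} c_\alpha\, f_\alpha\Bigr)^{m_{e(p)}-m_{s(p)}+1}\in I(\lambda).
\]

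The second step is to show that the FFLV monomials span $\text{gr } V(\lambda)$. Given any monomial $\prod f_\alpha^{s_\alpha}$ with exponent $\mathbf{s}\notin S(\lambda)$, some Dyck path $\mathbf{p}$ violates its defining inequality, i.e.\ $\sum_{\alpha\in\mathbf{p}} s_\alpha > m_{e(p)}-m_{s(p)}$. Multiplying the corresponding Dyck-path relation by a suitable monomial and expanding furnishes a straightening law that rewrites $\prod f_\alpha^{s_\alpha}\cdot v_\lambda$ as a linear combination of monomials strictly smaller in a homogeneous degree-lexicographic order adapted to the partial order $\succ$ on $R^-$. Induction on this order terminates in a linear combination of FFLV monomials parametrised by $S(\lambda)$.

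For linear independence, it suffices to show $|S(\lambda)|=\dim V(\lambda)$, together with the correct weight decomposition so that the formal character of the spanning set equals Weyl's character. The fundamental weight cases $\lambda=\omega_i$ are handled directly by matching $S(\omega_i)$ with the known tableaux basis of $V(\omega_i)$ (semistandard Young in type {\tt A}, symplectic semistandard in type {\tt C}). The Minkowski-sum identity $S(\lambda)+S(\mu)=S(\lambda+\mu)$ then propagates the count to all dominant $\lambda$ by combining with the Cartan embedding $V(\lambda)\hookrightarrow V(\omega_{i_1})\otimes\cdots\otimes V(\omega_{i_k})$ and an inductive Hilbert-series comparison.

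The main obstacle is the twin combinatorial inputs: the normality $S(\lambda)+S(\mu)=S(\lambda+\mu)$ of the FFLV polytope, and the straightening-to-Dyck-paths procedure inside $S(\lie n^-)$. Both are genuinely type-dependent, and the type {\tt C} case requires additional care for roots of the form $\alpha_{i,\overline{j}}$; once those combinatorial facts are in place, the remainder is a clean inductive character count.
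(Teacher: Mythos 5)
The paper offers no proof of Theorem~\ref{thm-fflv}; it is imported verbatim from \cite{FFL11a, FFL11b}, so your attempt has to be measured against the cited proof. Your overall architecture does match it: present $\text{gr } V(\lambda)$ as $S(\lie n^-)/I(\lambda)$ with $I(\lambda)$ generated by the $U(\lie n^+)$-orbit (acting by derivations) of the powers $f_\beta^{\lambda(h_\beta)+1}$, prove spanning by straightening monomials that violate a Dyck-path inequality, and obtain linear independence by a cardinality count bootstrapped from the fundamental weights via the Minkowski-sum property $S(\lambda)+S(\mu)=S(\lambda+\mu)$ and the embedding $V(\lambda+\mu)\hookrightarrow V(\lambda)\otimes V(\mu)$. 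That last part of your sketch is essentially the FFLV argument (the character-matching remark is redundant: spanning plus $|S(\lambda)|=\dim V(\lambda)$ already forces a basis).

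There is, however, a genuine gap in your spanning step, in two layers. First, the exponent $m_{e(p)}-m_{s(p)}+1$ reproduces a typo in the paper's displayed definition of $P(\lambda)$: the correct Dyck-path bound is the sum $m_{s(\mathbf{p})}+\cdots+m_{e(\mathbf{p})}$, which is what the paper itself uses in the proof of Theorem~\ref{thm:main}; with the difference, your relation is already false for $\lambda=\omega_1$ in $\lie{sl}_3$. Second, and more seriously, even granting a relation of the shape $\bigl(\sum_{\alpha\in\mathbf{p}} c_\alpha f_\alpha\bigr)^{M+1}\in I(\lambda)$, multiplying it by a monomial and expanding does \emph{not} furnish a straightening law: the expansion is symmetric over all path-supported monomials of that degree, so it contains, with nonzero multinomial coefficients, monomials \emph{larger} than your target $f^{\mathbf{s}}$ in any fixed homogeneous order, and a single linear relation only lets you rewrite its maximal term. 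What \cite{FFL11a, FFL11b} actually do is apply, for each violating exponent $\mathbf{s}$, a specifically chosen word of differential operators to $f_\theta^{M+1}$ (with $\theta$ the highest root between $s(\mathbf{p})$ and $e(\mathbf{p})$), tailored so that $f^{\mathbf{s}}$ is the unique maximal term of the resulting relation; the correction terms are strictly smaller and in general supported \emph{off} the path, on the join-closure grid --- exactly the phenomenon recorded in degree two by Proposition~\ref{prop-relations} (note the three-term relations in type {\tt C}) and exploited via Proposition~\ref{prop-diff} in the proof of Theorem~\ref{thm:main}. So the skeleton of your proposal is right, but the claimed mechanism for step two would fail as written and must be replaced by the full orbit family of relations together with its triangularity property.
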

There are several monomial bases known for $V(\lambda)$, such as the basis constructed by Gelfand-Tsetlin \cite{GT50} and its generalization \cite{Mol06}, Lusztig bases \cite{Lus90}, string bases \cite{Lit98, BZ92} but none of these bases is compatible with the PBW filtration.\\
An important property of the assigned polytopes is the following
\begin{theorem}[\cite{FFL11a, FFL11b}]\label{thm-mink}
Let $\lambda, \mu \in P^+$, then $P(\lambda)$ is a lattice polytope and moreover:
\[
P(\lambda) + P(\mu) = P(\lambda + \mu) \text{ and } S(\lambda) + S(\mu) = S(\lambda + \mu),
\]
where $+$ denotes the Minkowski sum. 
\end{theorem}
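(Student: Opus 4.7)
The plan is to decompose the claim into three parts: (i) the polytope inclusion $P(\lambda)+P(\mu) \subseteq P(\lambda+\mu)$, (ii) the reverse polytope inclusion, and (iii) the integral decomposition $S(\lambda+\mu) \subseteq S(\lambda)+S(\mu)$, which, once combined with the trivial direction $S(\lambda)+S(\mu) \subseteq S(\lambda+\mu)$ and (i)--(ii), forces both equalities and also implies that every vertex of $P(\lambda)$ is a lattice point, since a non-integral vertex would contradict (iii) for sufficiently divisible weights.

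Part (i) is immediate. Writing $\lambda = \sum m_i \omega_i$ and $\mu = \sum m'_i \omega_i$, any Dyck-path inequality for $P(\lambda+\mu)$ has right-hand side $(m_{e(p)}+m'_{e(p)}) - (m_{s(p)}+m'_{s(p)})$, which is exactly the sum of the corresponding right-hand sides for $P(\lambda)$ and $P(\mu)$; hence if $x\in P(\lambda)$ and $y\in P(\mu)$ then $x+y\in P(\lambda+\mu)$. Part (ii) follows formally once (iii) is in hand, since $S(\lambda+\mu)$ is then dense enough in $P(\lambda+\mu)$ by the trivial homogeneity (scaling $\lambda,\mu$ by $N$ and dividing back by $N$). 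So the content of the theorem is really (iii).

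For (iii), I would pursue the representation-theoretic route afforded by Theorem~\ref{thm-fflv}. The Cartan projection
\[
\pi: V(\lambda)\otimes V(\mu) \twoheadrightarrow V(\lambda+\mu), \qquad v_\lambda\otimes v_\mu \mapsto v_{\lambda+\mu},
\]
is a surjective $\lie g$-module map. The tensor product carries a filtration $\sum_{r+s=k} U(\lie n^-)_r v_\lambda \otimes U(\lie n^-)_s v_\mu$; since $\pi$ sends $v_\lambda\otimes v_\mu$ to the cyclic generator $v_{\lambda+\mu}$ and commutes with the action of $\lie n^-$, this filtration maps onto the PBW filtration of $V(\lambda+\mu)$. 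Hence $\mathrm{gr}\,V(\lambda+\mu)$ is a quotient of $\mathrm{gr}\,V(\lambda)\otimes \mathrm{gr}\,V(\mu)$. Applying Theorem~\ref{thm-fflv} on the source gives the spanning set $\{f^{\mathbf{t}} v_\lambda\otimes f^{\mathbf{t'}} v_\mu \mid \mathbf{t}\in S(\lambda),\ \mathbf{t'}\in S(\mu)\}$, whose image in $\mathrm{gr}\,V(\lambda+\mu)$ is $\{f^{\mathbf{t}+\mathbf{t'}} v_{\lambda+\mu}\}$ and therefore indexed by $S(\lambda)+S(\mu)$. Comparing with the basis $\{f^{\mathbf{s}} v_{\lambda+\mu} \mid \mathbf{s}\in S(\lambda+\mu)\}$ from Theorem~\ref{thm-fflv} applied to $V(\lambda+\mu)$, and using the monomial straightening laws that express $f^{\mathbf{s}}$ for $\mathbf{s}\notin S(\lambda+\mu)$ in terms of lattice monomials, one concludes $S(\lambda+\mu) \subseteq S(\lambda)+S(\mu)$.

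The hard step is the last compatibility: verifying that the image under $\pi$ of the monomial $f^{\mathbf{t}}v_\lambda\otimes f^{\mathbf{t'}}v_\mu$ really is, up to the ideal of lower-filtration terms, $f^{\mathbf{t}+\mathbf{t'}}v_{\lambda+\mu}$, and that the corresponding sums are parametrised without collapse by $S(\lambda)+S(\mu)$. Equivalently, one must rule out unexpected cancellations among PBW monomials of the same total degree. This is where the defining relations for $\mathrm{gr}\,V(\lambda+\mu)$ and the Dyck-path inequalities interact most delicately; the only alternative I see is a purely combinatorial splitting argument that, given $\mathbf{s}\in S(\lambda+\mu)$, constructs $\mathbf{t}\in S(\lambda)$ and $\mathbf{s}-\mathbf{t}\in S(\mu)$ by a greedy procedure along Dyck paths, which would avoid representation theory but requires a careful induction on the total weight.
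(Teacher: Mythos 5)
Your central step (iii) has a genuine gap, and it is exactly the step you flag as ``the hard step'' without supplying it. The claimed surjection $\mathrm{gr}\,V(\lambda)\otimes\mathrm{gr}\,V(\mu)\twoheadrightarrow\mathrm{gr}\,V(\lambda+\mu)$ does not come for free from the Cartan projection $\pi$. What $\pi$ gives is an identification of the cyclic submodule $C:=U(\lie n^-)(v_\lambda\otimes v_\mu)$ with $V(\lambda+\mu)$, carrying $F_k:=U(\lie n^-)_k(v_\lambda\otimes v_\mu)$ onto $U(\lie n^-)_k\, v_{\lambda+\mu}$. But $F_k$ is merely \emph{contained} in the tensor filtration $G_k:=\sum_{r+s=k}U(\lie n^-)_r v_\lambda\otimes U(\lie n^-)_s v_\mu$, and nothing guarantees $\pi(G_k)\subseteq U(\lie n^-)_k\, v_{\lambda+\mu}$: a tensor monomial $f^{\mathbf t}v_\lambda\otimes f^{\mathbf t'}v_\mu$ need not lie in $F_{|\mathbf t|+|\mathbf t'|}$, so its image under $\pi$ is not known to have PBW degree at most $|\mathbf t|+|\mathbf t'|$, let alone to equal $f^{\mathbf t+\mathbf t'}v_{\lambda+\mu}$ modulo lower filtration. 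What your argument needs is the strictness $F_k=C\cap G_k$, and in the literature this strictness is deduced \emph{from} the Minkowski property, not the other way around: it is precisely the second favourable-module axiom of \cite{FFL17b}, verified in this paper (Lemma~\ref{lem:favourable}) by invoking the Minkowski sum identity. Worse for your overall strategy, in the cited sources \cite{FFL11a,FFL11b} the Minkowski property is proved first, purely combinatorially --- one reduces to $\mu=\omega_i$ and constructs, for $\mathbf s\in S(\lambda+\omega_i)$, a $0$--$1$ summand in $S(\omega_i)$ by an explicit greedy procedure along the Dyck-path order --- and is then used as an input to the proof of the basis theorem you quote as Theorem~\ref{thm-fflv}. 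So the ``purely combinatorial splitting argument'' you mention in passing as a fallback is not an alternative route; it is the actual proof, and your representation-theoretic route runs the established logic backwards.

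The peripheral parts of your plan are sound. Part (i) is correct, since the right-hand sides of the Dyck-path inequalities are additive in $\lambda$; deducing (ii) from (iii) by scaling (using $P(N\lambda)=NP(\lambda)$ and density of rational points in a rational polytope) works, as does integrality of vertices via the unique decomposition of a vertex of a Minkowski sum into vertices of the summands. Moreover, your worry about ``collapse'' among the spanning monomials is the one issue that repairs itself: since $S(\lambda)+S(\mu)\subseteq S(\lambda+\mu)$ is trivial and $\dim\mathrm{gr}\,V(\lambda+\mu)=|S(\lambda+\mu)|$ by Theorem~\ref{thm-fflv}, any spanning set of monomials indexed by $S(\lambda)+S(\mu)$ would already force $|S(\lambda+\mu)|\le|S(\lambda)+S(\mu)|$ and hence equality of the two sets by cardinality --- no triangularity or no-cancellation analysis is needed. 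The entire burden therefore rests on the unproven spanning claim, i.e.\ on the filtration strictness above, which is where the circularity sits.
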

\subsection{Monomial bases for submodules}
We turn to admissible sets $A \subset R^-$ and the natural projection of the polytopes $P(\lambda)$. We define
\[
\pi_A: \mathbb{R}^{| R^+|}\longrightarrow \mathbb{R}^{|A|}, \; e_\alpha \mapsto \begin{cases} e_\alpha 
& \text{ if } -\alpha \in A, 
\\0 & \text{ else, }\end{cases}
\]
and
\[
P_A(\lambda) := \pi_A(P(\lambda)) \text{ which implies } S_A(\lambda) = \pi_A(S(\lambda)).
\]
As we are considering faces of the polytope $P(\lambda)$, we have by Theorem~\ref{thm-mink}:
\begin{corollary}\label{coro-mink}
For all $\lambda, \mu \in P^+$ and every admissible subset $A$ one has
\[
P_A(\lambda) + P_A(\mu) = P_A(\lambda + \mu) \text{ and } S_A(\lambda) + S_A(\mu) = S_A(\lambda + \mu).
\]
\end{corollary}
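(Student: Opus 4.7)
The plan is to reduce both identities directly to Theorem~\ref{thm-mink} by exploiting linearity of the projection map $\pi_A : \mathbb{R}^{|R^+|} \to \mathbb{R}^{|A|}$. Concretely, since $\pi_A$ is a linear map, it commutes with Minkowski sums: for any subsets $X, Y \subset \mathbb{R}^{|R^+|}$ one has
\[
\pi_A(X + Y) = \pi_A(X) + \pi_A(Y),
\]
because every element of the form $\pi_A(x) + \pi_A(y)$ equals $\pi_A(x+y)$, and vice versa. This holds equally well for sets of lattice points, since $\pi_A$ sends $\mathbb{Z}^{|R^+|}$ into $\mathbb{Z}^{|A|}$ coordinate-wise.

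First I would apply Theorem~\ref{thm-mink} to obtain $P(\lambda) + P(\mu) = P(\lambda+\mu)$ and $S(\lambda) + S(\mu) = S(\lambda+\mu)$. Next, I would apply $\pi_A$ to both sides of the polytope identity and invoke the definition $P_A(\nu) := \pi_A(P(\nu))$ to conclude
\[
P_A(\lambda) + P_A(\mu) = \pi_A(P(\lambda)) + \pi_A(P(\mu)) = \pi_A(P(\lambda)+P(\mu)) = \pi_A(P(\lambda+\mu)) = P_A(\lambda+\mu).
\]
The identical argument with $S$ in place of $P$, using $S_A(\nu) = \pi_A(S(\nu))$, yields the lattice-point statement.

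There is no real obstacle: the proof is essentially a formal manipulation once Theorem~\ref{thm-mink} is available. The only point worth a sentence of justification is the commutation of $\pi_A$ with Minkowski sum, which is immediate from linearity. No admissibility hypothesis on $A$ is actually invoked here; the statement holds for any subset $A \subset R^-$, with admissibility only entering the deeper results (Theorem~\ref{thm:intro-main}) that rely on $P_A(\lambda)$ parametrizing a basis of $\operatorname{gr} V_A(\lambda)$.
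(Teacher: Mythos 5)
Your proof is correct, and in substance it is the same one-line reduction to Theorem~\ref{thm-mink} that the paper makes: the paper's entire justification of Corollary~\ref{coro-mink} is the preceding remark ``As we are considering faces of the polytope $P(\lambda)$, we have by Theorem~\ref{thm-mink}.'' The only difference is the wrapper around that reduction. You work with the Section~5 definitions $P_A(\lambda) := \pi_A(P(\lambda))$ and $S_A(\lambda) = \pi_A(S(\lambda))$ and use that a coordinate projection, being additive and integrality-preserving, commutes with Minkowski sums; this is airtight and, as you correctly observe, uses no admissibility hypothesis on $A$. The paper instead gestures at the face property, which implicitly rests on the standard fact that a face of a Minkowski sum decomposes as the sum of the corresponding faces (here with supporting functional $-\sum_{\beta \notin A} x_\beta$, whose minimum $0$ is attained since $0 \in P(\lambda)$). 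The two formulations agree because the FFLV polytope is cut out by $x \geq 0$ together with inequalities whose coefficients are nonnegative, so setting the coordinates outside $A$ to zero maps $P(\lambda)$ into itself; hence the projection $\pi_A(P(\lambda))$ is identified with the face $\{x \in P(\lambda) \mid x_\beta = 0 \text{ for all } \beta \notin A\}$ used in the introduction, and likewise for lattice points. Your argument quietly absorbs this identification into the equality $S_A(\lambda) = \pi_A(S(\lambda))$, which the paper itself asserts (``which implies''), so within the paper's framework there is no gap; if one instead starts from the introduction's definition of $S_A(\lambda)$ as the lattice points of the face, one extra sentence on this truncation property is needed, since for a general lattice polytope the lattice points of a projection need not be images of lattice points.
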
 

We list a few useful relations:

\begin{proposition}\label{prop-relations}
Let $1 \leq i \leq n$ and let $\lie g = \lie{sl}_{n+1}$ or $\lie{sp}_{2n}$. Let $1 < k \leq i \leq  j < \ell \leq n$ (resp. $\ell < n$ for type ${\tt C})$, then one has the following relation
$$
(f_{1,j}f_{k, \ell} + c f_{1, \ell}f_{k, j})\cdot v_{\omega_i} = 0 \in V(\omega_i).
$$
In type ${\tt C}$, there are more relations that will be needed in the following:\\
\begin{enumerate}
    \item for $1 < k \leq \ell, j \leq n$, one has 
$$(f_{1,j}f_{k ,\bar{\ell}} + c_1f_{1, \bar{\ell}} f_{k,j} + c_2 f_{1, \bar{k}} f_{\ell, j})\cdot v_{\omega_i} = 0 \in V(\omega_i)$$
for some $c_r \in \C  \setminus \{0\}$, where $f_{\ell, j} = 1$ for $\ell = j+1$ and $0$ for $\ell > j+1$.
    \item for $1 < k \leq \ell < j \leq n$, one has
    $$(f_{1,\bar{j}}f_{k ,\bar{\ell}} + c_1f_{1, \bar{\ell}} f_{k,\bar{j}} + c_2 f_{1, \bar{k}} f_{\ell, \bar{j}})\cdot v_{\omega_i} = 0 \in V(\omega_i)$$
    for some $c_r \in \C  \setminus \{0\}$.
\end{enumerate}
\end{proposition}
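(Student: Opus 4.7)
The plan is to verify all three relations by direct computation inside the fundamental module $V(\omega_i)$, exploiting its realization as (a subspace of) an exterior power of the defining representation. For type ${\tt A}$, $V(\omega_i) = \Lambda^i \C^{n+1}$ with highest weight vector $v_{\omega_i} = e_1 \wedge \cdots \wedge e_i$, and each root vector $f_{a,b}$ acts on $V = \C^{n+1}$ as a (rescaled) elementary operator sending $e_a \mapsto e_{b+1}$ and killing the other basis vectors. For type ${\tt C}$, $V(\omega_i)$ sits inside $\Lambda^i \C^{2n}$ as the kernel of the symplectic contraction; root vectors $f_{a,b}$ with $b < n$ act as in type ${\tt A}$, while $f_{a,\overline{b}}$ (with $a \leq b$) has two nonzero entries in $V$, namely $e_a \mapsto e_{\overline{b}}$ and $e_b \mapsto e_{\overline{a}}$ (collapsing to a single entry when $a=b$).

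For the first relation, observe that the indices satisfy $1 < k \leq i \leq j < \ell$, so $e_{j+1}$ and $e_{\ell+1}$ are distinct from each other and from all basis vectors appearing in $v_{\omega_i}$. A direct computation gives, up to nonzero scalars,
\[
f_{1,j}f_{k,\ell}\cdot v_{\omega_i} \;\propto\; e_{j+1}\wedge e_2 \wedge \cdots \wedge \widehat{e}_k \wedge \cdots \wedge e_i \wedge e_{\ell+1},
\]
\[
f_{1,\ell}f_{k,j}\cdot v_{\omega_i} \;\propto\; e_{\ell+1}\wedge e_2 \wedge \cdots \wedge \widehat{e}_k \wedge \cdots \wedge e_i \wedge e_{j+1},
\]
and these two vectors are equal up to sign by antisymmetry of the wedge product. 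A suitable $c \neq 0$ then kills the sum. This argument is identical in both types, since the restriction $\ell < n$ in type ${\tt C}$ ensures all indices are unbarred.

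For the two type-${\tt C}$ relations, the strategy is the same but must track the two-entry action of $f_{k,\overline{\ell}}$. Expanding each of the three products on $v_{\omega_i}$ yields linear combinations of wedges in which exactly one barred index $\overline{k}$ or $\overline{\ell}$ appears, together with one unbarred index $e_{j+1}$. All three results lie in the weight space $V(\omega_i)_{\mu}$ with $\mu = \omega_i - (\epsilon_1 + \epsilon_k + \epsilon_\ell - \epsilon_{j+1})$. The plan is to show this weight space has dimension at most $2$ (by counting admissible wedges in the kernel of the symplectic contraction), hence any three vectors in it are linearly dependent; then a direct evaluation of the leading wedge monomial in each term establishes that all three coefficients are nonzero, yielding the unique (up to scalar) relation of the claimed shape. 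The convention $f_{\ell,j} = 1$ when $\ell = j+1$ and $f_{\ell,j}=0$ when $\ell > j+1$ in relation (1) is precisely what is needed to cover the boundary case where $\alpha_{\ell,j}$ is a simple root (giving a degenerate insertion) versus not a root at all; relation (2) has no such boundary since $\ell < j$ forces $\alpha_{\ell,\overline{j}}$ to be an honest root.

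The main obstacle is the sign and coefficient bookkeeping in the type-${\tt C}$ case: the two-term action of $f_{k,\overline{\ell}}$ and the symplectic normalization of $f_{a,\overline{a}}$ introduce signs that, in principle, could cause unexpected cancellations and force one of the $c_r$ to vanish. The dimension-count strategy avoids having to pin these down exactly: it suffices to exhibit three specific basis wedges in the appropriate weight space and verify, by acting on $v_{\omega_i}$ with each quadratic monomial separately, that no single term is identically zero in $V(\omega_i)$. Once nonvanishing of each term is established, existence of $c_1, c_2 \in \C \setminus \{0\}$ with the stated relation follows from the dimension bound.
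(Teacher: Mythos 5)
Your type {\tt A} argument (and its extension to the unbarred type {\tt C} relation with $\ell<n$) is fine: both monomials produce the same wedge up to a nonzero scalar, so a suitable $c\neq 0$ exists. But the type {\tt C} plan for relations (1) and (2) has a genuine gap: the asserted bound $\dim V(\omega_i)_\mu\le 2$ is false in general. Take $\lie g=\lie{sp}_{10}$, $i=3$, $k=2$, $\ell=3$, $j=3$, so $\mu=\omega_3-(\epsilon_1+\epsilon_2+\epsilon_3-\epsilon_4)=\epsilon_4$. The wedges of weight $\epsilon_4$ in $\Lambda^3\C^{10}$ are $e_4\wedge e_b\wedge e_{\bar b}$ for $b\in\{1,2,3,5\}$, each contracting to $\pm e_4$, so the contraction kernel $V(\omega_3)_{\epsilon_4}$ is $3$-dimensional. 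In general the multiplicity grows like $n-i$, since any pair $\{e_b,e_{\bar b}\}$ with $b>i$, $b\neq j+1$ may serve as the zero-weight pair. The relation is nevertheless true, but for a finer reason your count misses: all three products, expanded on $v_{\omega_i}$, only ever involve pairs $\{e_b,e_{\bar b}\}$ with $b\in\{1,k,\ell\}$, and the intersection of that smaller span with the contraction kernel is $2$-dimensional. So the argument is repairable, but the repair requires tracking exactly which wedges occur --- i.e.\ the coefficient bookkeeping you were hoping the dimension count would let you avoid. (Two smaller slips: your $\mu$ only matches relation (1) --- for relation (2) it is $\omega_i-\epsilon_1-\epsilon_j-\epsilon_k-\epsilon_\ell$ --- and for $j=n$ in relation (1) one has $f_{1,n}=f_{1,\bar n}$ with the two-entry action, so the ``all indices unbarred'' reduction fails there.)

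The second gap is the final inference: nonvanishing of each of the three vectors in a $2$-dimensional space does \emph{not} yield a dependence with all coefficients nonzero. If, say, $f_{1,j}f_{k,\bar\ell}\cdot v_{\omega_i}$ were proportional to $f_{1,\bar\ell}f_{k,j}\cdot v_{\omega_i}$ while the third vector is independent, the unique relation has $c_2=0$; ruling this out requires pairwise non-proportionality (any two of the three vectors linearly independent), which again forces explicit evaluation of coefficients, including the boundary cases: for $\ell=j+1$ the third term drops to degree one, and for $\ell>j+1$ the claim becomes a two-term proportionality, where you need the relevant span to be $1$-dimensional, not $\le 2$. For contrast, the paper's proof avoids both problems by a different mechanism: it starts from the obvious relations $f_{1,\ell}^2\cdot v_{\omega_i}=0$ (type {\tt A}) resp.\ $f_{1,\bar 1}^2\cdot v_{\omega_i}=0$ (type {\tt C}) and applies explicit raising monomials $\mathbf m\in U(\lie n^+)$ (e.g.\ $\mathbf m=e_{j+1,\ell}e_{1,k-1}$ for the first relation), so the coefficients $c_r$ come out as products of structure constants and are manifestly nonzero, with the degenerate cases $\ell=j+1$, $\ell>j+1$ handled uniformly by the same computation. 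If you want to keep your exterior-algebra route, you must (i) replace the weight-space bound by the $2$-dimensional bound on the span actually reached, and (ii) upgrade ``each term nonzero'' to ``no two terms proportional'' by computing the wedge expansions explicitly.
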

\begin{proof}
For $\lie g = \lie{sl}_{n+1}$ one has $f_{1,\ell}^2\cdot v_{\omega_i} = 0$, resp. for $\lie g = \lie{sp}_{2n}$ one has $f_{1, \ell-1}\cdot v_{\omega_i} = 0$ and $f_{1, \bar{1}}^2\cdot v_{\omega_i} = 0 \in V(\omega_i)$. 
Certainly, for any monomial $\mathbf{m} \in U(\lie g)$: $$\mathbf{m}.f_{1, \bar{1}}^2\cdot v_{\omega_i} = 0.$$
\begin{itemize}
    \item For the first relation, one sets $\mathbf{m} = e_{j+1, \ell} e_{1,k-1}$.
    \item For the second relation, one sets $\mathbf{m} = e_{1, k-1} e_{j+1, \overline{j+1}} e_{1, \ell-1} e_{1,j}$.
    \item For the third relation, one sets $\mathbf{m} = e_{1,k-1} e_{1,j-1} e_{1, \ell-1}$.
\end{itemize}
The computation is straightforward and can be found in \cite{FFL11a, FFL11b}.
\end{proof}

We give an equivalent statement of the proposition. For $\alpha \succeq \beta \in R^-$ one has
\[
f_\alpha f_\beta \cdot v_{\omega_i} +  \sum_{\mathbf{\gamma}} c_{\mathbf{\gamma}} f_{\gamma_1} f_{\gamma_2}\cdot v_{\omega_i} = 0.
\]
where the sum is over $\mathbf{\gamma} = (\gamma_1, \gamma_2) \in \mathcal{P}(\alpha +  \beta)$ with $(\alpha, \beta) \leq (\gamma_1, \gamma_2)$.

The main motivation for the paper is the following:
\begin{theorem}\label{thm:main}
Let $\lie g$ be of type {\tt A} or {\tt C}, $\lambda  \in P^+$, $A \subset R^-$ admissible, then
\[
\left\{ \prod_{\alpha \in A} f_\alpha^{s_\alpha}\cdot v_\lambda \mid \mathbf{s} \in S_A(\lambda) \right\}
\]
is a basis of $\text{gr } V_A(\lambda)$ and hence (for any chosen ordering in the monomials) a basis for $V_A(\lambda)$.
\end{theorem}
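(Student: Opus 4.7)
The proof splits into linear independence and spanning. By Proposition \ref{pro:pbwsubmodules}, we have a compatible embedding $\text{gr } V_A(\lambda) \hookrightarrow \text{gr } V(\lambda)$ of PBW-graded $S(\lie n_A^-)$-modules, so both assertions can be verified inside the ambient graded module. For linear independence: since $P_A(\lambda)$ is by definition a face of $P(\lambda)$, extension by zero realises $S_A(\lambda)$ as a subset of $S(\lambda)$, hence the proposed monomials form a subset of the FFLV basis of $\text{gr } V(\lambda)$ given by Theorem \ref{thm-fflv}. Linear independence is then immediate.

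The main work is spanning. Since $\text{gr } V_A(\lambda) = S(\lie n_A^-) \cdot v_\lambda$, it suffices to show that any monomial $\prod_{\alpha \in A} f_\alpha^{s_\alpha} v_\lambda$ with $\supp \mathbf{s} \subset A$ lies in the span of those with $\mathbf{s} \in S_A(\lambda)$. Inside $\text{gr } V(\lambda)$ the FFLV straightening from \cite{FFL11a, FFL11b} already expresses this monomial in the basis indexed by $S(\lambda)$; what one must verify is that only $A$-supported exponent vectors appear in the expansion whenever the input is $A$-supported. For the fundamental case $\lambda = \omega_i$, this follows directly from Proposition \ref{prop-relations}: the quadratic straightening applied to a pair $\alpha \succeq \beta$ in $A$ rewrites $f_\alpha f_\beta v_{\omega_i}$ as a combination of $f_{\gamma_1} f_{\gamma_2} v_{\omega_i}$ indexed by $(\gamma_1, \gamma_2) \in \alpha \oplus \beta$, and by the second condition of Definition \ref{def:admissible} these pairs lie in $A \times A$; the power-vanishing relations $f_\beta^{\omega_i(h_\beta)+1} v_{\omega_i} = 0$ are single-term and trivially preserve support. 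Iterating straightening (against the partial order $\prec$ on $R^-$, which terminates) lands any input in the cone of monomials satisfying all Dyck-path inequalities on $A$, namely $S_A(\omega_i)$.

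For general $\lambda = \sum m_i \omega_i$, I would bootstrap using the Minkowski identity $S_A(\lambda) + S_A(\mu) = S_A(\lambda+\mu)$ from Corollary \ref{coro-mink} together with the standard embedding $V(\lambda+\mu) \hookrightarrow V(\lambda) \otimes V(\mu)$ via highest weight vectors, which is compatible with both the PBW filtration and the admissible subalgebra $\lie n_A^-$. Combined with the fundamental-weight spanning, this reduces any $A$-supported monomial in $\text{gr } V_A(\lambda)$ to a combination of those indexed by $S_A(\lambda)$.

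\textbf{Main obstacle.} The delicate point is that the complete defining ideal of $\text{gr } V(\lambda)$ is not generated by the purely quadratic relations alone: one must also include the higher-degree relations obtained by iterated $\lie n^+$-action on the fundamental relations, and this involves root differences $\theta - \alpha_i$ that might a priori escape $A$. This is exactly where Proposition \ref{prop-diff} is needed: it guarantees that root differences lying in $R^-$ remain inside join closures of Dyck paths, which by Propositions \ref{prop-a-grid} and \ref{prop-c-grid} describe the combinatorial backbone of admissible sets in types {\tt A} and {\tt C}. Carefully propagating this stability through the straightening algorithm is the heart of the proof, and it is precisely here that the restriction to these two types enters, since the analogous join-closure statements are not established in types {\tt B} and {\tt D}.
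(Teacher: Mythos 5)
Your linear-independence half is exactly the paper's (it is immediate from $S_A(\lambda)\subset S(\lambda)$ and Theorem~\ref{thm-fflv}), but there is a genuine gap in your spanning argument: the reduction from fundamental weights to general $\lambda$ via Corollary~\ref{coro-mink} and the embedding $V(\lambda+\mu)\hookrightarrow V(\lambda)\otimes V(\mu)$ does not work. If you expand $f^{\mathbf{s}}(v_\lambda\otimes v_\mu)$ by the coproduct and rewrite each tensor factor using the fundamental-weight (or inductively known) spanning sets, you land in the span of the vectors $f^{\mathbf{t}}v_\lambda\otimes f^{\mathbf{u}}v_\mu$ with $\mathbf{t}\in S_A(\lambda)$, $\mathbf{u}\in S_A(\mu)$ --- a space of dimension up to $|S_A(\lambda)|\cdot|S_A(\mu)|$, which strictly exceeds $|S_A(\lambda+\mu)|$ and, crucially, is not contained in the Cartan component $U(\lie n_A^-)\cdot(v_\lambda\otimes v_\mu)$. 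There is no way to project back and conclude that $f^{\mathbf{s}}\cdot v_{\lambda+\mu}$ lies in the span of the monomials indexed by $S_A(\lambda)+S_A(\mu)$. The tensor--Minkowski mechanism yields \emph{lower} bounds on dimensions (it is how one usually proves linear independence, e.g.\ in \cite{FFL11a,FFL11b} and in Lemma~\ref{lem:favourable}), whereas spanning is an upper-bound statement; so your bootstrap step proves nothing beyond what you already have from Theorem~\ref{thm-fflv}. Your ``main obstacle'' paragraph in fact correctly identifies where the real work lies (higher-degree relations from the $\lie n^+$-action, controlled by Proposition~\ref{prop-diff}), but it is left as a remark rather than carried out, and it sits in tension with the bootstrap plan: if the Minkowski reduction were valid, no higher relations would be needed at all.

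The paper's proof instead treats general $\lambda$ directly and makes no use of Proposition~\ref{prop-relations} (which serves only Lemma~\ref{lem:notadmissible}) or of any fundamental-weight reduction. Given $\mathbf{s}$ supported on $A$ violating a Dyck path $\mathbf{p}\subset A$ with consecutive roots linked, one invokes the straightening law of \cite{FFL11a}, obtained by applying operators $e_\beta$ to $f_{\alpha_{i_1,j_s}}^{\sum_{\alpha\in\mathbf{p}}s_\alpha}\cdot v_\lambda=0$; the key new observations are that the relevant $\beta$ are exactly differences $\gamma_1-\gamma_2$ with $\gamma_i\in\overline{\mathbf{p}}^{\oplus}$, that $\overline{\mathbf{p}}^{\oplus}\subset A$ by admissibility together with Propositions~\ref{prop-a-grid} and~\ref{prop-c-grid}, and that $(\overline{\mathbf{p}}^{\oplus}+\gamma)\cap R^-\subset\overline{\mathbf{p}}^{\oplus}$ (Proposition~\ref{prop-diff}), so every monomial produced by the straightening stays supported on $A$; termination follows from the homogeneous order of \cite{FFL11a}, and type {\tt C} reduces to type {\tt A} via Proposition~\ref{prop-adm-ind}. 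To repair your proposal you would have to replace the Minkowski bootstrap by precisely this support-stable straightening for arbitrary $\lambda$ --- i.e.\ promote your final paragraph from an acknowledged obstacle to the actual argument.
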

A remark before the proof: a proof for the $\lie{sl}_{n+1}$-case has been provided in \cite{Fou16} and then Proposition~\ref{prop-adm-ind} allows to reduce the study for $\lie{sp}_{2n}$ to $\lie{sl}_{n+1}$.

\begin{proof}
Let $\lambda = \sum m_i \omega_i \in P^+$. Since $S_A(\lambda) \subset S(\lambda)$,
\[
\left\{ \prod_{\alpha \in A} f_\alpha^{s_\alpha}\cdot v_\lambda \mid \mathbf{s} \in S_A(\lambda)\right \}
\]
is linearly independent, due to Theorem~\ref{thm-fflv}. It remains to prove, that this set spans $V_A(\lambda)$. Clearly, 
\[
\left\{ \prod_{\alpha \in A} f_\alpha^{s_\alpha}\cdot v_\lambda \mid \mathbf{s} \in \mathbb{Z}_{\geq 0}^{|A|} \right\}
\]
is a spanning set and we have to prove a straightening law to reduce to $S_A(\lambda)$. Let $\mathbf{s} \in \mathbb{Z}_{\geq 0}^{|A|}$ and suppose there is a Dyck path $\mathbf{p} \subset A$, such that consecutive roots are linked and
\[
\sum_{\alpha \in \mathbf{p}} s_\alpha > m_{s(\mathbf{p})}+ \ldots + m_{e(\mathbf{p})}.
\]
We will show that $f^{\mathbf{s}}\cdot v_\lambda$ is in the span of 
\[
\{ f^{\mathbf{s}}\cdot v_\lambda \mid \mathbf{s} \in S_A(\lambda) \}.
\]
First, we consider $\lie{sl}_{n+1}$, then $\mathbf{p} = (\alpha_{i_1, j_1}, \ldots, \alpha_{i_s, j_s})$ with $i_1 \leq  \ldots \leq i_s, j_1 \leq \ldots \leq j_s$. We have seen in Proposition~\ref{prop-a-grid}, that the full subgrid $\overline{\mathbf{p}}^{\oplus}$ of roots $\alpha_{i_k,j_\ell}$ is contained in $A$.\\
In \cite{FFL11a}, a homogeneous order $<$ on $\mathbb{Z}^{|R^-|}$ is defined, we consider its restriction to $\mathbb{Z}^{|A_{\mathbf{p}}|}$. It is shown using operators from $\{ e_{\beta} \mid\beta \in R^+\}$ applied on $f_{\alpha_{i_1, j_s}}^{\sum_{\alpha \in \mathbf{p}} s_\alpha}\cdot v_\lambda ( = 0 \in V(\lambda))$ that
\[
f^{\mathbf{s}}\cdot v_\lambda = \sum_{\mathbf{t} < \mathbf{s}} c_{\mathbf{t}} f^{\mathbf{t}}\cdot v_\lambda
\]
for some $c_{\mathbf{t}}$. A further inspection of the proof in \cite{FFL11a} shows, that the needed operators $e_\beta$ are exactly given by $\beta = \gamma_1 - \gamma_2$ with $\gamma_i \in \overline{\mathbf{p}}^{\oplus}$. We conclude from Proposition~\ref{prop-diff}, that each $\mathbf{t}$ on the right hand side is supported on $\overline{\mathbf{p}}^{\oplus} \subset A$. Concluding, for each violated Dyck path we have a straightening law with respect to $<$. Iterating this procedure is a finite process, showing that \[
\{ f^{\mathbf{s}}\cdot v_\lambda \mid \mathbf{s} \in S_A(\lambda) \}
\]
is a spanning set for $V_A(\lambda)$.\\
We turn to $\lie g = \lie{sp}_{2n}$. It turns out, that the same argument as for $\lie{sl}_{n+1}$ is valid here, thanks to Proposition~\ref{prop-adm-ind}. The homogeneous order for $\lie{sl}_{n+1}$ is naturally extended to $\lie{sp}_{2n}$ in \cite{FFL11b}.\\
Similarly as above, we just have to show that for each Dyck path $\mathbf{p}$, there is a set of differential operators in $U(\lie n^+)$ that are acting on a given subset $A^{\tau}_{\mathbf{p}}$ and to generate a straightening law. \cite{FFL11b} provides the straightening law and we have to check, whether this is supported on $A$ only. But this follows as for $\lie{sl}_{n+1}$ from Proposition~\ref{prop-c-grid} and, with a close inspection of the proof in \cite{FFL11b}, from Proposition~\ref{prop-diff}. 
\end{proof}

With the following lemma, we justify the discussion of admissible sets. Reasonably, we restrict ourselves to $A \subset R^-$ such that $\lie n_A^-$ is a Lie subalgebra.
\begin{lemma}\label{lem:notadmissible}
Suppose $A \subset R^-$ is not admissible, then there exists $i \in I$ such that the lattice points in the face $P_{A}(\omega_i)$ do not parametrize a basis of $V_{A}(\omega_i)$.
\end{lemma}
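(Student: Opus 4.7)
The plan is to locate a specific violation of condition~(2) of Definition~\ref{def:admissible} and then use the defining FFLV relations to exhibit a vector in $V_A(\omega_i)$ whose expansion in the FFLV basis of $V(\omega_i)$ is supported on a lattice point outside $S_A(\omega_i)$.

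Since $\lie n_A^-$ is a Lie subalgebra, condition~(1) of Definition~\ref{def:admissible} holds, so the failure of admissibility must come from condition~(2): there exist $\alpha\succeq\beta\in A$ and $(\gamma_1,\gamma_2)\in\alpha\oplus\beta$ with $\{\gamma_1,\gamma_2\}\not\subset A$. If $\gamma_2=0$, then $\gamma_1=\alpha+\beta$ is a root and $[f_\alpha,f_\beta]$ is a nonzero multiple of $f_{\gamma_1}$ in types {\tt A} and {\tt C}, forcing $\gamma_1\in A$ by condition~(1), a contradiction. Hence both $\gamma_1,\gamma_2$ are honest roots, and I may assume $\gamma_1\notin A$.

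Inspecting Example~\ref{exam-join} case by case shows that in every such situation the supports of $\alpha$ and $\beta$ overlap on a nontrivial interval of indices (e.g., in type {\tt A} with $\alpha=\alpha_{a,b},\beta=\alpha_{c,d}$ and $a\leq c$, the interval $[c,b]$ is nonempty; the remaining type-{\tt C} cases are analogous). Picking any $i$ in this interval guarantees that $x:=f_\alpha f_\beta\cdot v_{\omega_i}\in V_A(\omega_i)$ is nonzero. By the equivalent form of Proposition~\ref{prop-relations},
\[
x \;=\; -\!\!\!\!\sum_{(\gamma_1',\gamma_2')\in\alpha\oplus\beta}\!\! c_{\gamma'}\,f_{\gamma_1'}f_{\gamma_2'}\cdot v_{\omega_i},\qquad c_{\gamma'}\neq 0.
\]
Each pair $(\gamma_1',\gamma_2')\in\alpha\oplus\beta$ is spread-out enough that the lattice point $e_{\gamma_1'}+e_{\gamma_2'}$ lies in $S(\omega_i)$, so that each $f_{\gamma_1'}f_{\gamma_2'}v_{\omega_i}$ is, up to scalar, an FFLV basis vector of $V(\omega_i)$ by Theorem~\ref{thm-fflv}. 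The displayed identity is therefore the FFLV expansion of $x$, and the basis vector indexed by $e_{\gamma_1}+e_{\gamma_2}\notin S_A(\omega_i)$ appears with nonzero coefficient. Hence $x\in V_A(\omega_i)$ cannot be written as a linear combination of $\{f^{\mathbf{s}}v_{\omega_i}:\mathbf{s}\in S_A(\omega_i)\}$, and these monomials do not form a basis of $V_A(\omega_i)$.

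The main technical point to verify is, in the type-{\tt C} cases of Example~\ref{exam-join} with $|\alpha\oplus\beta|=2$ (parts~(2)--(4)), that the two pairs yield distinct lattice points of $S(\omega_i)$ and that all coefficients in Proposition~\ref{prop-relations} are nonzero as claimed; alternatively, the type-{\tt C} situation can be reduced to the type-{\tt A} one via Proposition~\ref{prop-adm-ind}.
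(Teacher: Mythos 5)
Your proposal is correct and takes essentially the same route as the paper's proof: isolate a violating pair $\beta_1 \succ \beta_2$ with $(\gamma_1,\gamma_2)\in\beta_1\oplus\beta_2$, $\gamma_1\notin A$ (your $\gamma_2=0$ bracket argument is the paper's observation that disjoint supports would force $\beta_1\oplus\beta_2=\{\beta_1+\beta_2\}\subset A$ by the subalgebra hypothesis), choose $i$ so that $f_{\beta_1}f_{\beta_2}\cdot v_{\omega_i}$ and $f_{\gamma_1}f_{\gamma_2}\cdot v_{\omega_i}$ are both nonzero, and combine Proposition~\ref{prop-relations} with the $\succ$-incomparability of $\gamma_1,\gamma_2$ (so $e_{\gamma_1}+e_{\gamma_2}\in S(\omega_i)\setminus S_A(\omega_i)$) and the linear independence of the FFLV basis to conclude that $f_{\beta_1}f_{\beta_2}\cdot v_{\omega_i}\in V_A(\omega_i)$ lies outside the span of the monomials indexed by $S_A(\omega_i)$. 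The type-{\tt C} nonvanishing checks you flag at the end are left at the same level of terseness in the paper's own proof, so nothing essential is missing.
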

\begin{proof}
Suppose $A$ is not admissible, then there exists $\beta_1 \succ \beta_2$ such that $(\gamma_1, \gamma_2) \in \beta_1 \oplus \beta_2$ but $\{ \gamma_1, \gamma_2\} \notin A$. Suppose $ \supp \beta_1 \cap \supp \beta_2 = \emptyset$ but $\beta_1 \oplus \beta_2 \neq \emptyset$, then $\beta_1 \oplus \beta_2 = \{ \beta_1 + \beta_2 \}$, but this is in $A$ since $\lie n_A^-$ is a Lie subalgebra. In both cases, there exists $i$ with 
\[
f_{\beta_1} f_{\beta_2}\cdot v_{\omega_i} \neq 0 \text{ and }f_{\gamma_1} f_{\gamma_2}\cdot v_{\omega_i} \neq 0.
\]
$\gamma_1, \gamma_2$ are non-comparable with respect to $\succ$, and hence, due to the definition of Dyck paths, $e_{\gamma_1} + e_{\gamma_2} \in S(\omega_i) \setminus S_A(\omega_i)$. Proposition~\ref{prop-relations} on the other hand implies, that $f_{\beta_1} f_{\beta_2}\cdot v_{\omega_i}$ is a nontrivial linear combination involving $f_{\gamma_1} f_{\gamma_2}\cdot v_{\omega_i}$. Since $S(\omega_i)$ parametrizes a linearly independent subset
\[
f_{\beta_1} f_{\beta_2}\cdot v_{\omega_i} \notin \text{ span } \left\{
\prod_{\alpha \in A} f_\alpha^{s_\alpha}\cdot v_{\omega_i} \mid \mathbf{s} \in S_A(\omega_i)  \right\}.
\] 
\end{proof}

\subsection{Geometric interpretation}
In this section, we are giving a geometric interpretation of the results on monomial bases. The main motivation is due to the  admissible sets of the form $A_w$ for some $w \in W$. 
In this context, the module $V_A(\lambda)$ is isomorphic to the Demazure module $V_w(\lambda)$ and one obtains an action of $\lie n^+ \oplus \lie h$ on $\text{gr } V_w(\lambda)$ (resp. $w^{-1}(\lie n^+ \oplus \lie h)$ on $\text{gr }V_A(\lambda)$.

\subsubsection{\textsf{Favourable modules and flat degenerations}}  Briefly explained is a favourable module $M$, for the general setup on favourable modules we refer to \cite{FFL17b}. Let $M$ be a cyclic finite-dimensional complex vector space acted upon by a complex algebraic unipotent group $\U$ satisfying certain conditions. 
First, this module is cyclic for $U(\lie n)$, the universal enveloping algebra of $\lie n$, the nilpotent Lie algebra corresponding to $\U$. 
For a fixed basis of $\lie n$ and a fixed homogeneous ordering $\leq$ on monomials in this basis, we obtain a filtration of $M$ and the associated graded space has one-dimensional leafs only. The monomials of the basis of $\text{gr } M$ are called essential \cite{FFL17a} and we denote $\text{es } M \subset \Z^N_{\geq 0}$ the set of the exponents of these monomials. $M$ is called favourable if
\begin{itemize}
    \item there exists a convex polytope $P(M)\subset \R_{\geq 0}^N$ such that its set of lattice points, $S(M)$, coincides with the set of essential multi-exponents $es(M)$.
    \item for all $n\geq 1$, consider $U(\lie n)\cdot (m^{\otimes n}) \subset M \otimes \cdots \otimes M$ and demand $\dim U(\lie n)\cdot (m^{\otimes n})  = |n S(M)|$.
\end{itemize}

In the following, we show that the submodule $V_{A}(\lambda)\subset V(\lambda)$ is favourable. For a fixed admissible subset $A$, let $\G_{A}$ denote the corresponding connected, simply connected Lie group of the Lie algebra $\lie g_A$. Notice that $\G_{A}$ is a unipotent subgroup of $\G$, the connected, simply connected Lie group of the Lie algebra $\lie g$. We have:

\begin{lemma}\label{lem:favourable}
For $A$ an admissible set, $V_{A}(\lambda)$ is a favourable  $\G_{A}$-module for $\lambda \in P^+$ regular.
\end{lemma}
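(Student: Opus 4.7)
The plan is to verify the two bullet points in the definition of a favourable module with the polytope $P_A(\lambda)$ as candidate. The first condition (essential multi-exponents equal lattice points of a convex polytope) is essentially a repackaging of Theorem~\ref{thm:main}; the second (tensor power dimensions match lattice point counts) follows from a Cartan-component argument combined with the Minkowski property of Corollary~\ref{coro-mink}.

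First I would fix the ordered basis of $\lie n_A^-$ obtained by restricting the ordered basis of $\lie n^-$ used in \cite{FFL11a, FFL11b}, and the induced homogeneous monomial order. By Proposition~\ref{pro:pbwsubmodules}, the PBW filtration on $V_A(\lambda)$ is the restriction of that on $V(\lambda)$, and the induced grading on $\text{gr}\, V_A(\lambda)$ embeds into $\text{gr}\, V(\lambda)$. Theorem~\ref{thm:main} asserts that $\{f^{\mathbf s}\cdot v_\lambda \mid \mathbf s \in S_A(\lambda)\}$ is a basis of $\text{gr}\, V_A(\lambda)$ obtained via precisely this order (and via the straightening laws confined to the admissible support, as in the proof of that theorem). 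Hence the essential set is $\text{es}(V_A(\lambda)) = S_A(\lambda) = P_A(\lambda) \cap \mathbb{Z}^{|A|}$, verifying the first condition with $P(V_A(\lambda)) := P_A(\lambda)$.

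For the second condition, I would consider $v_\lambda^{\otimes n} \in V(\lambda)^{\otimes n}$, a highest weight vector of weight $n\lambda$. The cyclic $U(\lie g)$-module it generates is the Cartan component $V(n\lambda) \subset V(\lambda)^{\otimes n}$; in particular $U(\lie n^-)\cdot v_\lambda^{\otimes n} \cong V(n\lambda)$ with $v_\lambda^{\otimes n}$ playing the role of a highest weight generator. Restricting to the admissible subalgebra gives
\[
U(\lie n_A^-)\cdot v_\lambda^{\otimes n} \;\cong\; V_A(n\lambda).
\]
By Theorem~\ref{thm:main} and Corollary~\ref{coro-mink},
\[
\dim V_A(n\lambda) \;=\; |S_A(n\lambda)| \;=\; |n\, S_A(\lambda)|,
\]
which is exactly what is required.

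The one point that needs care — and which is the main obstacle — is verifying that the essential monomials for $V_A(\lambda)$ with respect to the induced order are \emph{exactly} those indexed by $S_A(\lambda)$, rather than some other basis of the same size. This amounts to checking that every straightening relation required to rewrite an arbitrary monomial $f^{\mathbf s}\cdot v_\lambda$ with $\mathbf s \in \mathbb{Z}_{\geq 0}^{|A|}$ into the claimed spanning set involves only monomials supported on $A$ and strictly smaller in the order; but this is precisely what was shown in the proof of Theorem~\ref{thm:main} via Propositions~\ref{prop-a-grid}, \ref{prop-c-grid} and \ref{prop-diff}. Regularity of $\lambda$ enters in ensuring that $P_A(\lambda)$ is full-dimensional in $\mathbb{R}^{|A|}$ (i.e.\ that no coordinate $s_\beta$ is forced to vanish by the defining Dyck-path inequalities), so that the polytope $P_A(\lambda)$ genuinely qualifies as the required convex body in the sense of \cite{FFL17b}.
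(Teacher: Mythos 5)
Your proof is correct and takes essentially the same route as the paper: both identify $\operatorname{es}(V_A(\lambda)) = S_A(\lambda)$ using the FFLV homogeneous order plus a dimension count, and both verify the tensor-power condition via the Cartan component $U(\lie n_A^-)\cdot v_\lambda^{\otimes n} \cong V_A(n\lambda)$ together with the Minkowski property of Corollary~\ref{coro-mink}. The only cosmetic difference is directional: the paper first deduces $S_A(\lambda)\subseteq \operatorname{es}(V_A(\lambda))$ from essentialness in the ambient module $V(\lambda)$ (via \cite[Theorem 11.8]{FFL17b}) and closes by dimension, whereas you obtain the reverse inclusion $\operatorname{es}(V_A(\lambda))\subseteq S_A(\lambda)$ from the straightening law in the proof of Theorem~\ref{thm:main} and close by the same dimension count.
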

\begin{proof}
We follow the proof of \cite[Lemma 5]{Fou16}. It suffices to consider the case $\g= \lie{sp}_{2n}$. Let the roots in $R^+$ be ordered as follows:
\begin{align*}
    \alpha_{1,\overline{1}} \succ \alpha_{1,\overline{2}} \succ \alpha_{2,\overline{2}} \succ \cdots\succ \alpha_{1,\overline{n}} \succ \cdots \succ \alpha_{n,\overline{n}} \succ \alpha_{1,n-1} \succ\cdots\succ \alpha_{n-1,n-1} \succ \cdots\succ \alpha_{1,1}.
\end{align*}
We consider the restriction of this order on $A$. By \cite[Theorem 11.8]{FFL17b}, choosing the above order and taking the induced homogeneous reverse lexicographic order on monomials in $S(\lie n^-)$, implies that the $\lie{sp}_{2n}$-module $V(\lambda)$ is favourable. In particular, we have $S(\lambda)=es(V(\lambda))$. Let $\sss\in \Z_{\geq 0}^N$, with $s_{\alpha}=0$ for all $\alpha\notin A$. Suppose $\sss$ is essential for $V(\lambda)$. By definition, and through the trivial embedding $\Z_{\geq 0}^{\# A}\hookrightarrow\Z_{\geq 0}^N$, it follows that $\sss$ is essential for $V_{A}(\lambda)$. It therefore follows that $S_{A}(\lambda)\subset es(V_{A}(\lambda))$. The opposite inclusion $es(V_{A}(\lambda)) \subset S_{A}(\lambda)$ holds by reason of dimension, which gives the equality. 

Furthermore, we have an injective homomorphism of modules $V_{A}(\lambda+\mu)\hookrightarrow V_{A}(\lambda)\otimes V_{A}(\mu)$, obtained by restricting the embedding $V(\lambda +   \mu) \hookrightarrow V(\lambda) \otimes V(\mu)$. From this, and by Corollary \ref{coro-mink}, we have that for all $\lambda,\mu \in P^+$:
\[\dim U(\lie n_{A}^-)\cdot v_{\lambda}\otimes v_{\mu} =|S_{A}(\lambda+\mu)|=|S_{A}(\lambda) + S_{A}(\mu)|. \]
The proof is completed by considering the case $\lambda = \mu $, and extending by induction to the general case.
\end{proof}

We denote the projective varieties
\[
X_{A}(\lambda) := \overline{\G_A\cdot [v_\lambda]} \hookrightarrow \mathbb{P}(V(\lambda)).
\]
and 
\[
X_{A}^a(\lambda) := \overline{\G^a_{A}\cdot [v_\lambda]} \hookrightarrow \mathbb{P}\left(\text{gr } V_{A}(\lambda)\right).
\]
where $\G^a_{A}$ denotes $|A|$ copies of the additive group $\mathbb{G}_a$, which is a Lie group associated with the Lie algebra of the vector space $\lie n_A^-$ with a trivial bracket. Note that the latter and hence $\G^a_{A}$ act on $\text{gr } M$.

\begin{definition}
For $A$ admissible and $\lambda \in P^+$ regular, we call $X_{A}^a(\lambda)$ the PBW degenerate variety corresponding to $X_{A}(\lambda)$.
\end{definition}
While the definition is reasonable for all types, the following proposition depends on the existence of a suitable basis. 
\begin{proposition}\label{pro:degenerations}
Let $\lie g$ be of type ${\tt A, C}$. Let $\lambda \in P^+$ be regular, and $A$ be an admissible set, then the variety $X_{A}(\lambda)$ degenerates flatly into the PBW degenerate variety $X_{A}^a(\lambda)$ and further into a toric variety.
\end{proposition}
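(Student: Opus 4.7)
The plan is to reduce the statement to the general machinery of favourable modules from \cite{FFL17b} using Lemma~\ref{lem:favourable}, which has already established that $V_A(\lambda)$ is favourable as a $\G_A$-module for regular $\lambda$ in types {\tt A} and {\tt C}. The general theorem on favourable modules asserts exactly the conclusion we want: given a favourable cyclic $\U$-module, the projective variety $\overline{\U \cdot [v]} \hookrightarrow \mathbb{P}(M)$ admits a flat degeneration to the PBW variety $\overline{\U^a \cdot [v]} \hookrightarrow \mathbb{P}(\text{gr } M)$, which in turn degenerates flatly to the toric variety associated to the normal polytope $P(M)$. So the first step is simply to invoke this principle for $M = V_A(\lambda)$ and $\U = \G_A$.

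Concretely, I would proceed as follows. First, observe that $V_A(\lambda)$ is a cyclic $U(\lie n_A^-)$-module with generator $v_\lambda$, the PBW filtration is compatible with the one on $V(\lambda)$ by Proposition~\ref{pro:pbwsubmodules}, and the associated graded module is cyclic for the symmetric algebra of $\lie n_A^-$, so that $\G^a_A$ acts. Second, appeal to Lemma~\ref{lem:favourable} together with Theorem~\ref{thm:main} to identify the essential multi-exponents of $V_A(\lambda)$ with the lattice points $S_A(\lambda)$ of the face $P_A(\lambda)$ of the FFLV polytope, and use Corollary~\ref{coro-mink} to verify the Minkowski/semigroup property required in the definition of favourability.

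With favourability in hand, apply \cite[Theorem~1.1 / Proposition~1.11]{FFL17b} (the main flat degeneration result for favourable modules): it yields a one-parameter flat family with generic fibre $X_A(\lambda)$ and special fibre the PBW degenerate variety $X_A^a(\lambda) = \overline{\G_A^a \cdot [v_\lambda]} \subset \mathbb{P}(\text{gr } V_A(\lambda))$, and moreover a second flat degeneration of $X_A^a(\lambda)$ to the projective toric variety associated with the polytope $P_A(\lambda)$. The fact that $P_A(\lambda)$ is a normal lattice polytope (a face of the normal polytope $P(\lambda)$, with the Minkowski property of Corollary~\ref{coro-mink}) is precisely what guarantees that the toric limit is well-defined and of the expected dimension, so that flatness follows from the equality of Hilbert functions along the family.

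The main obstacle, if any, is a bookkeeping check that the framework of \cite{FFL17b} applies verbatim to $\G_A$ and its abelianization $\G_A^a$, rather than to the full Borel unipotent group; but this is formal because $\lie n_A^-$ is a genuine Lie subalgebra (Definition~\ref{def:admissible}(1)) and the essential-set/polytope coincidence is available from Lemma~\ref{lem:favourable}. Everything else is a direct citation of the favourable module machinery, so the proof itself will be brief.
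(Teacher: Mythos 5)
Your proposal is correct and follows essentially the same route as the paper: the paper's proof is a one-line appeal to the flat degeneration theorem for favourable modules (\cite[Theorem 8.1]{FFL17b}) combined with Lemma~\ref{lem:favourable}. Your additional verifications (cyclicity, compatibility of filtrations via Proposition~\ref{pro:pbwsubmodules}, and the Minkowski property via Corollary~\ref{coro-mink}) merely unpack what Lemma~\ref{lem:favourable} already establishes, so aside from the precise theorem number cited in \cite{FFL17b}, there is no substantive difference.
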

\begin{proof}
It follows from \cite[Theorem 8.1]{FFL17b} since the module $V_{A}(\lambda)$ is favourable (according to Lemma \ref{lem:favourable}).
\end{proof}

\begin{remark}
In view Proposition \ref{pro:degenerations}, whenever $A$ is an admissible set given as $A_w$ for some Weyl group element $w$ and $\lambda\in P^+$ regular, we obtain flat degenerations of the Schubert variety $X_w(\lambda)$ into a PBW degenerate Schubert variety $X_w^a(\lambda)$ and further into a toric variety. More particularly, we obtain flat PBW and toric degenerations of symplectic Schubert varieties, adding to the similar scenario for Schubert varieties of type {\tt A} \cite{Fou16}.
\end{remark}

\subsubsection{\textsf{PBW-semistandard tableaux bases for coordinate rings}} 
Feigin introduced in \cite{Fei11} PBW-semistandard tableaux to describe a monomial basis of the homogeneous coordinate ring of the PBW-degenerate flag variety in type ${\tt A}$, extended by the first author in \cite{Bal20} to type ${\tt C}$. Within our setup, it is natural to describe the subset of those tableaux parametrizing a basis of the homogeneous coordinate ring of the PBW degenerate Schubert variety $X_w^a(\lambda)$ whenever $w$ corresponds to an admissible subset $A$.

For a Young diagram $Y_{\lambda}$ corresponding to a partition $\lambda=(\lambda_1\geq \cdots\geq \lambda_{n} \geq 0)$, let $\mu_c$ denote the length of the $c$-th column.

A \emph{type ${\tt A}$ PBW-semistandard tableau} is a filling of the Young diagram $Y_{\lambda}$ of shape $\lambda=(\lambda_1\geq \cdots\geq \lambda_{n} \geq 0)$, with numbers $T_{r,c}\in \{1,\ldots, n+1\}$ satisfying the properties:
\begin{itemize}
    \item if $T_{r,c} \leq \mu_c$, then $T_{r,c}=r$,
    \item if $r_1<r_2$ and $T_{r_1,c}\neq r_1 $, then $T_{r_1,c}>T_{r_2,c}$,
    \item for any $c>1$ and any $r$, there exists $r'\geq r$ such that $T_{r',c-1}\geq T_{r,c}$.
\end{itemize}

A \emph{type ${\tt C}$ PBW-semistandard tableau} is a filling of the Young diagram $Y_{\lambda}$ of shape $\lambda=(\lambda_1\geq \cdots\geq \lambda_{n} \geq 0)$, with numbers $T_{r,c}\in \{1,\ldots, n, \overline{n},\ldots,\overline{1}\}$ satisfying all the three properties above and the following extra property:
\begin{itemize}
\item if $T_{r,c}= r$, and $\exists \,\, r'$ such that $T_{r',c}=\overline{r}$, then $r'<r$.
\end{itemize}

\begin{example}
Consider $\g$ of type {\tt A}$_3$. Then the full set of type {\tt A} PBW-semistandard tableaux of shape $\lambda = \omega_1 + \omega_2$ is the following set of tableaux with entries in $\{1,2,3,4\}$:\\
$$
\small\begin{ytableau}
1 & 1\\
2
\end{ytableau},
\begin{ytableau}
1 & 2\\
2
\end{ytableau},
\begin{ytableau}
1 & 1\\
3
\end{ytableau},
\begin{ytableau}
1 & 2\\
3
\end{ytableau},
\begin{ytableau}
1 & 3\\
3
\end{ytableau},
\begin{ytableau}
3 & 1\\
2
\end{ytableau},
\begin{ytableau}
3 & 2\\
2
\end{ytableau},
\begin{ytableau}
3 & 3\\
2
\end{ytableau},
\begin{ytableau}
4 & 1\\
2
\end{ytableau},
\begin{ytableau}
4 & 2\\
2
\end{ytableau},
$$
$$
\small\begin{ytableau}
4 & 3\\
2
\end{ytableau},
\begin{ytableau}
4 & 4\\
2
\end{ytableau},
\begin{ytableau}
4 & 1\\
3
\end{ytableau},
\begin{ytableau}
4 & 2\\
3
\end{ytableau},
\begin{ytableau}
4 & 3\\
3
\end{ytableau},
\begin{ytableau}
4 & 4\\
3
\end{ytableau},
\begin{ytableau}
1 & 1\\
4
\end{ytableau},
\begin{ytableau}
1 & 2\\
4
\end{ytableau},
\begin{ytableau}
1 & 3\\
4
\end{ytableau},
\begin{ytableau}
1 & 4\\
4
\end{ytableau}.
$$

By renaming the entries to read $\{1,2,\overline{2},\overline{1}\}$, it can be seen that the last four tableaux are not symplectic. So without them, we recover the respective tableaux for $\g$ of type {\tt C}$_2$.
\end{example}

\begin{proposition}[\cite{Fei12, Bal20}]\label{pro:tableaux} Let $\lie g$ be of type ${\tt A, C}$. For $\lambda =\sum_{k=1}^{n}m_k\omega_k$ a dominant integral weight, the set $\{\prod_{\alpha} f_\alpha^{s_\alpha}\cdot v_\lambda \mid \mathbf{s} \in S(\lambda)\}$ is in a weight preserving one-to-one correspondence with the set of PBW-semistandard tableaux of shape $\lambda$. 
\end{proposition}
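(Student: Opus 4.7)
The plan is to build the bijection column by column via the Minkowski decomposition $S(\lambda) = S(\omega_{\mu_1}) + \cdots + S(\omega_{\mu_t})$ from Theorem~\ref{thm-mink}, where $\mu_1 \geq \cdots \geq \mu_t$ are the column lengths of $Y_\lambda$. First I would establish the correspondence for fundamental weights, then glue columns together and verify the PBW-semistandard axioms.

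For a single fundamental weight $\omega_k$ in type ${\tt A}$, the polytope $P(\omega_k)$ has only $0/1$ coordinates, and its lattice points are well known to biject with $k$-element subsets of $\{1,\ldots,n+1\}$. Given such a subset $S$, one fills a column of height $k$ by setting $T_{r,1} = r$ whenever $r \in S \cap \{1,\ldots,k\}$ and placing the elements of $S \cap \{k+1,\ldots,n+1\}$ in the remaining rows in strictly decreasing order from top to bottom; the corresponding lattice point has $s_{\alpha_{r, T_{r,1}-1}} = 1$ precisely in the rows where $T_{r,1} \neq r$. The two single-column PBW-semistandard axioms encode exactly this rule, so the fundamental case is a direct check. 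In type ${\tt C}$ the same idea applies to the label set $\{1,\ldots,n,\overline n,\ldots,\overline 1\}$, with the extra symplectic axiom matching the combinatorics of $S(\omega_k)$ in type ${\tt C}$ (see \cite{FFL11b}).

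For general $\lambda = \sum m_k \omega_k$, the Minkowski decomposition provides $\mathbf{s} = \sum_c \mathbf{s}_c$ with $\mathbf{s}_c \in S(\omega_{\mu_c})$, but this decomposition is not unique in general. To obtain a well-defined bijection I would pin down a canonical decomposition by reading columns from left to right and at each step choosing a lexicographically extremal $\mathbf{s}_c$ subject to the remainder lying in $S(\lambda - \omega_{\mu_1} - \cdots - \omega_{\mu_c})$ (this is the kind of greedy rule made explicit in \cite{Fei11, Bal20}). The inter-column PBW-semistandard condition, namely that for every $r$ in column $c>1$ there exists $r' \geq r$ with $T_{r',c-1} \geq T_{r,c}$, should be exactly the tableau-theoretic translation of this compatibility. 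Weight preservation is then automatic, since $\prod_\alpha f_\alpha^{s_\alpha}\cdot v_\lambda$ has $\lie h$-weight $\lambda - \sum_\alpha s_\alpha\alpha$, which equals the sum of the column weights and matches the weight of the tableau by the fundamental-weight case.

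The main obstacle is showing that the greedy column decomposition of any $\mathbf{s} \in S(\lambda)$ always yields a PBW-semistandard filling, and conversely that every PBW-semistandard tableau reassembles to an honest lattice point of $S(\lambda)$. A clean route is to reformulate each Dyck-path inequality for $S(\lambda)$ as a sum of the corresponding single-column inequalities plus precisely the monotonicity enforced by the tableau axiom, and then induct on the number of columns. Verifying this Dyck-path-wise splitting is where the genuine combinatorics lives, and it is the one place where the type ${\tt A}$ and type ${\tt C}$ arguments --- worked out respectively in \cite{Fei12} and \cite{Bal20} --- diverge in their details.
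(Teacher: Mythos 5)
Your column-by-column plan --- the fundamental-weight base case, the Minkowski decomposition of $S(\lambda)$ from Theorem~\ref{thm-mink}, and a greedy leftmost/lexicographic rule to resolve the non-uniqueness of that decomposition --- is correct in outline and is essentially the same construction the paper sketches and defers to \cite{Fei12, Bal20}: there, each factor $f_{\alpha_{i,j}}$ (resp.\ $f_{\alpha_{i,\overline{j}}}$) contributes an entry $j+1$ (resp.\ $\overline{j}$) in row $i$, placed in the leftmost feasible (``best choice'') column, which is exactly your canonical decomposition, and the inter-column tableau axiom plays the role you assign it. The one place you correctly flag as carrying the real work --- that the greedy decomposition always produces a PBW-semistandard filling and that every such tableau reassembles into a point of $S(\lambda)$ --- is precisely what is carried out in the cited references rather than in this paper, so your proposal matches the paper's treatment.
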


The idea for the proof of the above proposition is the following. If a root vector $f_{\alpha_{i,j}}$ appears in $\prod_{\alpha} f_\alpha^{s_\alpha}\cdot v_\lambda$, with $\mathbf{s} \in S(\lambda)$, then there is a box in the $i$-th row of the corresponding tableau containing the entry $j+1$. Additionally for type {\tt C}, if there is a root vector of the form $f_{\alpha_{i,\overline{j}}}$ in $\prod_{\alpha} f_\alpha^{s_\alpha}\cdot v_\lambda$, with $\mathbf{s} \in S(\lambda)$, there should be an entry $\overline{j}$ in some box in the $i$-th row. Details on how these boxes are arranged to form the PBW-semistandard tableaux can be found in \cite{Bal20}.\\

Now we turn to the case of an admissible set $A$. We define

\begin{definition}
An \emph{$A$-permissible type ${\tt A}$ PBW-semistandard tableau} of shape $\lambda$ is a type ${\tt A}$ PBW-semistandard tableau of shape $\lambda$, that satisfies the following extra condition:

\begin{itemize}
    \item[(a)] for all $\alpha_{i,j}\notin A$, if $j \geq \mu_c$ and $r=i$ then $T_{r,c}\neq j+1$.
\end{itemize}

An \emph{$A$-permissible type ${\tt C}$ PBW-semistandard tableau} of shape $\lambda$  is a type ${\tt C}$ PBW-semistandard tableau of shape $\lambda$, that satisfies (a) above and the following extra condition:

\begin{itemize}
    \item[(b)] for all $\alpha_{i,\overline{j}}\notin A$, $T_{r,c}\neq \overline{j}$ for all $c$ and $r$ such that $r=i$.
\end{itemize}
\end{definition}

\begin{example}
Consider an admissible set of type ${\tt A}_3$; $A=\{\alpha_{1,2},\alpha_{2},\alpha_{1,3},\alpha_{2,3}$\}. Then the following is the full set of $A$-permissible type ${\tt A}$ PBW-semistandard tableaux of shape $\lambda = (2,1)$:
$$
\small\begin{ytableau}
1 & 1\\
2
\end{ytableau},
\begin{ytableau}
1 & 1\\
3
\end{ytableau},
\begin{ytableau}
1 & 3\\
3
\end{ytableau},
\begin{ytableau}
3 & 1\\
2
\end{ytableau},
\begin{ytableau}
3 & 3\\
2
\end{ytableau},
\begin{ytableau}
4 & 1\\
2
\end{ytableau},
\small\begin{ytableau}
4 & 3\\
2
\end{ytableau},
$$
$$
\small\begin{ytableau}
4 & 4\\
2
\end{ytableau},
\begin{ytableau}
4 & 1\\
3
\end{ytableau},
\begin{ytableau}
4 & 3\\
3
\end{ytableau},
\begin{ytableau}
4 & 4\\
3
\end{ytableau},
\begin{ytableau}
1 & 1\\
4
\end{ytableau},
\begin{ytableau}
1 & 3\\
4
\end{ytableau},
\begin{ytableau}
1 & 4\\
4
\end{ytableau}.
$$
\end{example}

The following is an analogue of Proposition \ref{pro:tableaux} for admissible sets.

\begin{proposition}\label{pro:permissibletableaux}
For an admissible set $A$, and for $\lambda =\sum_{k=1}^{n}m_k\omega_k$ a dominant integral weight, the set $\{\prod_{\alpha} f_\alpha^{s_\alpha}\cdot v_\lambda \mid \mathbf{s} \in S_A(\lambda)\}$ is in a weight preserving one-to-one correspondence with the set of $A$-permissible PBW-semistandard tableaux of shape $\lambda$. 
\end{proposition}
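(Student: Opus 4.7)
The plan is to invoke Proposition \ref{pro:tableaux} and show that the bijection it provides restricts along the inclusion $S_A(\lambda) \subset S(\lambda)$ to a weight-preserving bijection onto the set of $A$-permissible PBW-semistandard tableaux. The whole argument is a matter of translating the combinatorial condition ``$s_\alpha = 0$ for all $\alpha \notin A$'' into a condition on the associated tableau.

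First, I would recall the explicit assignment underlying Proposition \ref{pro:tableaux}, as described in \cite{Fei12, Bal20}: under that bijection, a factor $f_{\alpha_{i,j}}$ occurring in $\prod_\alpha f_\alpha^{s_\alpha}\cdot v_\lambda$ creates, in the row $r = i$ of the associated tableau, a box with entry $j+1$ placed in a column $c$ with $j \geq \mu_c$; similarly in type {\tt C} a factor $f_{\alpha_{i,\overline{j}}}$ produces a box in row $i$ whose entry is $\overline{j}$. Conversely, the inverse bijection reads off, from each tableau entry $T_{r,c}$ with $T_{r,c} \neq r$, a root vector whose type is determined by whether $T_{r,c}$ lies in $\{1,\ldots,n+1\}$ or in $\{\overline{n},\ldots,\overline{1}\}$ and by the triple $(r, c, T_{r,c})$ together with $\mu_c$.

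Next I would observe that, with this dictionary in hand, the following equivalences hold for any $\mathbf{s}\in S(\lambda)$ with associated tableau $T$:
\begin{itemize}
\item $s_{\alpha_{i,j}} > 0$ if and only if $T$ has a box $(r,c)$ with $r = i$, $j \geq \mu_c$, and $T_{r,c} = j+1$;
\item in type {\tt C}, $s_{\alpha_{i,\overline{j}}} > 0$ if and only if $T$ has a box $(r,c)$ with $r = i$ and $T_{r,c} = \overline{j}$.
\end{itemize}
Hence the condition $\mathbf{s} \in S_A(\lambda)$ (vanishing of $s_\alpha$ for $\alpha \notin A$) is exactly the conjunction of conditions (a) and (b) in the definition of an $A$-permissible tableau. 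This gives that the restriction of the bijection from Proposition \ref{pro:tableaux} to $S_A(\lambda)$ lands in, and is surjective onto, the set of $A$-permissible PBW-semistandard tableaux of shape $\lambda$. Weight preservation is inherited from Proposition \ref{pro:tableaux}: both the monomial $\prod_\alpha f_\alpha^{s_\alpha}\cdot v_\lambda$ and its associated tableau have weight $\lambda - \sum_\alpha s_\alpha\, \alpha$, the latter read off from the content.

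The main obstacle is not conceptual but bookkeeping: one must check carefully that the ``best-choice column'' rule from \cite{Fei12, Bal20} never forces a factor $f_\alpha$ with $\alpha \notin A$ to be introduced when converting an $A$-permissible tableau back to a multi-exponent, and vice versa. This reduces to the observation that the explicit formulas for the assignment $f_\alpha \leftrightarrow T_{r,c}$ depend only locally on $(r, c, T_{r,c}, \mu_c)$, so that each root vector in the monomial is uniquely determined by, and uniquely determines, a single tableau entry. Once this local compatibility is verified in both types {\tt A} and {\tt C}, the bijection of Proposition \ref{pro:tableaux} restricts as claimed.
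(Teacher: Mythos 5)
Your proposal is correct and follows essentially the same route as the paper: the paper likewise restricts the bijection of Proposition~\ref{pro:tableaux} to $S_A(\lambda)$ and observes that the dictionary $f_{\alpha_{i,j}} \leftrightarrow$ entry $j+1$ in row $i$ (resp.\ $f_{\alpha_{i,\overline{j}}} \leftrightarrow$ entry $\overline{j}$) translates the vanishing condition $s_\alpha = 0$ for $\alpha \notin A$ precisely into conditions (a) and (b) of $A$-permissibility, invoking Theorem~\ref{thm:main} to identify the monomial set as a basis of $\operatorname{gr} V_A(\lambda)$. Your treatment is in fact slightly more explicit than the paper's on the inverse direction (the ``best-choice column'' bookkeeping, which the paper dismisses with ``clearly''), so no gap remains.
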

\begin{proof}
We claim that the bijection of Proposition \ref{pro:tableaux} still holds when restricted to the case of an admissible set $A$. Indeed, suppose $\alpha_{i,j}\notin A$ with $j \geq \mu_c$ and $r=i$. Then clearly $T_{r,c}\neq j+1$ in the corresponding tableau. And likewise for any $\alpha_{i,\overline{j}}\notin A$, it follows that $T_{r,c}\neq \overline{j}$ for all columns $c$ and rows $r$ such that $r=i$. This together with Theorem \ref{thm:main} imply the result.
\end{proof}

\printbibliography
\end{document}